\DeclareMathOperator{\ad}{\mathsf{ad}}
\DeclareMathOperator{\C}{C}
\DeclareMathOperator{\Der}{Der}
\DeclareMathOperator{\gl}{\mathsf{gl}}
\DeclareMathOperator{\Homol}{H}
\DeclareMathOperator{\im}{Im}
\DeclareMathOperator{\Ker}{Ker}
\DeclareMathOperator{\N}{N}
\DeclareMathOperator{\pgl}{\mathsf{pgl}}
\DeclareMathOperator{\psl}{\mathsf{psl}}
\DeclareMathOperator{\Rad}{Rad}
\DeclareMathOperator{\rk}{rk}
\DeclareMathOperator{\sym}{S}
\DeclareMathOperator{\Sl}{\mathsf{sl}}
\DeclareMathOperator{\su}{\mathsf{su}}
\DeclareMathOperator{\tr}{tr}
\DeclareMathOperator{\Z}{Z}
\newcommand{\vertbar}{\>|\>}
\newcommand{\set}[2]{\ensuremath{\{ #1 \vertbar #2 \}}}
\def\aform     {\ensuremath{\langle\,\cdot\, , \cdot\,\rangle}}
\newtheorem{theorem}{Theorem}
\newtheorem*{A}{Theorem A}
\newtheorem{lemma}{Lemma}
\newtheorem{corollary}{Corollary}
\newtheorem*{conjecture}{Conjecture}
\begin{document}

\title{On regular Lie algebras}

\author{Pasha Zusmanovich}
\address{University of Ostrava, Czech Republic}
\email{pasha.zusmanovich@osu.cz}

\date{First written June 20, 2020; last minor revision November 12, 2022}
\thanks{Comm. Algebra \textbf{49} (2021), no.3, 1104--1119}

\begin{abstract}
We study so called regular Lie algebras, i.e. Lie algebras in which each nonzero
element is regular. We make a connection with an open problem whether any 
element of reduced trace zero in a simple associative algebra is a commutator.
\end{abstract}

\keywords{Regular element; minimal non-abelian Lie algebra; 
anisotropic Lie algebra; division algebra; commutator}
\subjclass[2020]{17B05; 17B50; 17B60; 16K20; 16U99} 

\maketitle

\section*{Introduction}

A Lie algebra is called regular, if for any its nonzero element $x$, the 
characteristic polynomial of the adjoint linear map $\ad x$ has the same 
minimal possible non-vanishing power. The study of regular Lie algebras was 
initiated in \cite{mna}. There we proved some elementary properties of those 
algebras over a field of characteristic zero, and established their connection 
with another classes of Lie algebras, anisotropic and minimal nonabelian. Here,
after setting the necessary definitions and notation, and recalling the 
necessary facts in the preliminary \S \ref{s-not}, we continue to study regular
Lie algebras, by extending results of \cite{mna} to include the case of positive 
characteristic; this is done in \S \ref{sec-regular}. The brief \S \ref{s-real}
contains a description of the real case. In \S \ref{s-min} we discuss a 
connection with minimal non-abelian Lie algebras and minimal non-commutative 
associative algebras, and describe minimal non-regular algebras. In 
\S \ref{s-ass} we discuss an interesting open problem: whether any element of 
reduced trace zero in a simple associative algebra can be represented as a 
commutator of two elements, and suggest an attack on (a particular case of) this problem utilizing regular Lie algebras.

\section{Notation, definitions, and recollections}\label{s-not}

\renewcommand{\thesubsection}{\arabic{subsection}}

\subsection{Ground field, general notation}

Throughout this note, all algebras are assumed to be finite-di\-men\-si\-o\-nal,
defined over an infinite field $K$. In most of our results, we stipulate 
additional conditions on $K$ (like being perfect, or not of small 
characteristic), and these conditions are always specified explicitly in the 
statements of theorems and lemmas. By $\overline K$ we denote the algebraic 
closure of $K$. Needless to say, all that will follow is nontrivial only if the
ground field $K$ is not algebraically closed (see the beginning of 
\S \ref{s-real}).

If $S$ is a subalgebra of a Lie algebra $L$, the normalizer of $S$ in $L$, i.e.
the subalgebra $\set{x \in L}{[S,x] \subseteq S}$, is denoted by $\N_L(S)$. The
centralizer of an element $x \in L$, i.e. the subalgebra 
$\set{y \in L}{[y,x] = 0}$, is denoted by $\C_L(x)$. By $[L,L]$ and $\Z(L)$ are 
denoted the commutant and the center of $L$, respectively. By $\Der(L)$, or, 
occasionally, by $\Der_K(L)$ if we want to stress over which ground field $K$ we
are working at the moment, we denote the Lie algebra of derivations of an 
algebra $L$. If $A$ is an associative algebra, by $A^{(-)}$ we will denote its
``minus'' algebra, i.e., the Lie algebra defined on the same vector space $A$
subject to multiplication $[a,b] = ab - ba$. The direct sum of algebras is 
denoted by $\oplus$, while the direct sum of vector spaces (usually also 
subalgebras, but with not necessary trivial multiplication between them), is 
denoted by $\dotplus$. 

Recall that a symmetric bilinear form $\aform: L \times L \to K$ on a
Lie algebra $L$ is called \emph{invariant}, if 
$\langle [x,y],z \rangle = \langle x,[y,z] \rangle$ for any $x,y,z \in L$. The 
orthogonal complement of a subspace $S$ of $L$ with respect to $\aform$ is 
denoted by $S^\perp$.

\subsection{Fitting decomposition}

Following \cite[Chapter VII, \S1]{bourbaki}, we will call the set $X$ of 
elements of a Lie algebra $L$ \emph{almost commuting}, if for any $x,y \in X$, 
there is an integer $n$ such that $(\ad x)^n y = 0$. This is, essentially, 
equivalent to the existence of the Fitting decomposition of $L$ with respect to the adjoint action of $X$:
$$
L = L^0(X) \dotplus L^1(X) ,
$$
where the Fitting $0$-component is defined as
$$
L^0(X) = \set{y \in L}
{\text{for any } x\in X, \text{ there is integer } n \text{ such that } 
(\ad x)^n (y) = 0} ,
$$
and the Fitting $1$-component is defined as
$$
L^1(X) = \sum_{x \in X} \big(\bigcap_{n \ge 0} (\ad x)^n (L)\big) .
$$

\subsection{Regular Lie algebras}

Let $L$ be a Lie algebra. For any $x \in L$, consider the characteristic 
polynomial of the adjoint map $\ad x$:
$$
\chi_{\ad x}(t) = \det (t - \ad x) = 
\sum_{i=0}^{\dim L} a_i(x) t^i .
$$
Recall (\cite[Chapter VII, \S 2.2]{bourbaki} or 
\cite[Chapter III, \S 1]{jacobson}) that the rank of $L$, denoted by $\rk L$, is
the minimal number $r$ such that $a_r(x) \ne 0$ for some $x \in L$ (or, what is
the same, the multiplicity of the eigenvalue zero of $\ad x$). An element $x$ is
called \emph{regular}, if this minimum is attained for it, i.e., if 
$a_{\rk L}(x) \ne 0$. An element $x \in L$ is regular if and only if the Fitting
$0$-component $L^0(x)$ is of the minimal possible dimension $\rk L$.

A Lie algebra is called \emph{regular}, if each its nonzero element is regular. 
(Note that this notion is entirely different from the notion of regular 
subalgebra in the classical works of Dynkin about subalgebras of simple Lie 
algebras over algebraically closed fields of characteristic zero).

\subsection{Anisotropic algebras}

Recall that a Lie algebra $L$ is called \emph{anisotropic}, if for any $x\in L$,
$\ad x$ is a semisimple linear map. Anisotropic Lie algebras were studied in 
\cite{farn-anis} and \cite{premet}--\cite{premet-cartan} (see also 
\cite[\S 1]{mna}). We will use repeatedly the following powerful result due to 
Premet.

\begin{A} (The ground field $K$ is perfect of characteristic $\ne 2,3$).
If $L$ is an anisotropic centerless Lie algebra, then $L$ is sandwiched between
the direct sum of forms of classical simple Lie algebras, and their derivation 
algebras. That is,
\begin{equation}\label{eq-s}
S_1 \oplus \dots \oplus S_n \subseteq L \subseteq 
\Der(S_1) \oplus \dots \oplus \Der(S_n)
\end{equation}
for some forms of classical simple Lie algebras $S_1, \dots, S_n$.
\end{A}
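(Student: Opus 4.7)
The overall plan is to reduce the problem to the structure theory of semisimple Lie algebras over $\overline K$ and then use anisotropy to rule out non-classical simple components. The first task would be to show that $L$ is semisimple. If the solvable radical $\Rad(L)$ were nonzero, it would contain a nonzero abelian ideal $I$; for any $x \in I$ and any $y \in L$, the bracket $[x,[x,y]]$ lies in $[I,I] = 0$, so $(\ad x)^2 = 0$. But $\ad x$ is semisimple by anisotropy, so a nilpotent $\ad x$ must vanish, forcing $x \in \Z(L) = 0$. Hence $L$ has no abelian ideals and $\Rad(L) = 0$, so $L$ is semisimple.

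Next I would extend scalars. Perfectness of $K$ ensures that the radical commutes with the base change $K \hookrightarrow \overline K$, so $L \otimes_K \overline K$ remains semisimple. The structure theorem for semisimple Lie algebras over an algebraically closed field of characteristic $\ne 2, 3$ then produces simple Lie algebras $\widetilde S_1, \dots, \widetilde S_n$ over $\overline K$ with
$$
\widetilde S_1 \oplus \dots \oplus \widetilde S_n \subseteq L \otimes_K \overline K \subseteq \Der(\widetilde S_1) \oplus \dots \oplus \Der(\widetilde S_n) .
$$

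The main obstacle is showing that each $\widetilde S_i$ is of classical type rather than of Cartan or Melikian type. To address this, I would pick a maximal abelian subalgebra $T$ of $L$; by anisotropy every element of $T$ is $\ad$-semisimple, making $T$ a torus of $L$. Its image in each factor $\widetilde S_i$ generates (over $\overline K$) a Cartan subalgebra consisting of toral elements, and by Premet's characterization of non-classical simple Lie algebras in characteristic $> 3$ (which exhibits non-toral elements in the Cartan subalgebras of Cartan-type and Melikian algebras), the existence of a toral Cartan subalgebra forces $\widetilde S_i$ to be classical. This is where the real difficulty lies, as the argument depends on the deep classification of simple Lie algebras in positive characteristic.

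Finally, the absolute Galois group $\mathrm{Gal}(\overline K / K)$ permutes the components $\widetilde S_i$; regrouping along Galois orbits and descending to $K$ yields forms $S_1, \dots, S_m$ of classical simple Lie algebras over $K$ satisfying the desired sandwich inclusion.
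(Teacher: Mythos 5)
First, note that the paper itself does not prove this statement: Theorem~A is quoted as a deep result of Premet, with the proof delegated to \cite[Corollary 2]{premet} for $p>5$ and to \cite{premet-p5} for $p=5$. Your opening step is fine and is the standard one: a nonzero abelian ideal $I$ would give nonzero $x\in I$ with $(\ad x)^2=0$, contradicting semisimplicity of $\ad x$ together with $\Z(L)=0$, so $L$ is semisimple. But already your next step misquotes the structure theory: over an algebraically closed field of positive characteristic the Block theorem does \emph{not} give a sandwich between $\bigoplus \widetilde S_i$ and $\bigoplus\Der(\widetilde S_i)$; it gives $\bigoplus (\widetilde S_i\otimes O_{n_i})\subseteq L\otimes_K\overline K\subseteq \bigoplus\bigl(\Der(\widetilde S_i)\otimes O_{n_i}\dotplus \Der(O_{n_i})\bigr)$ with truncated polynomial algebras $O_{n_i}$, and one must first use anisotropy to force $n_i=0$ (elements $s\otimes x_1$ are ad-nilpotent, hence would have to be central).

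The decisive gap, however, is the criterion you invoke to rule out Cartan and Melikian types. The implication ``$\widetilde S_i$ possesses a toral Cartan subalgebra $\Rightarrow$ $\widetilde S_i$ is classical'' is false: the Witt algebra $W(1;1)=\Der\bigl(K[x]/(x^p)\bigr)$ has the one-dimensional toral Cartan subalgebra $Kx\partial$ (the eigenvalues of $\ad(x\partial)$ on the basis $x^{i+1}\partial$, $i=-1,0,\dots,p-2$, are the $i$, so $Kx\partial$ is self-normalizing and $x\partial$ is toral), yet $W(1;1)$ is simple and not classical. The correct statements in this circle of ideas require either that \emph{all} Cartan subalgebras (of the $p$-envelope) be tori, or, what Premet actually uses and what meshes directly with anisotropy, the absence of absolute zero divisors: a simple Lie algebra over a perfect field of characteristic $\ne 2,3$ is either a form of a classical algebra or contains a nonzero $c$ with $(\ad c)^2=0$ (cf.\ \cite{premet-bssr}, quoted later in the paper), and anisotropy excludes the latter since such a $c$ is ad-nilpotent and noncentral. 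So the skeleton of your reduction is reasonable, but the pivotal step rests on a false characterization, and even with the correct one the passage from ``no strong degeneration'' to the full sandwich \eqref{eq-s} is precisely the content of Premet's theorem and cannot be dispatched in a paragraph.
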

Here and below, by abuse of notation, we identify the algebra $S_i$ with the 
algebra of its inner derivations $\ad(S_i)$.

In characteristic zero the statement is trivial, in characteristic $p>5$ it
as established in \cite[Corollary 2]{premet}, and in characteristic $p=5$, in
\cite{premet-p5}.

We also need to consider an auxiliary, a priori slightly more general condition,
namely, that in a Lie algebra any nilpotent element is central. Such Lie 
algebras will be dubbed \emph{nilpotent-free}. It is obvious that any 
anisotropic Lie algebra is nilpotent-free, and below we will see that over 
perfect fields these two notions coincide (Lemma \ref{lemma-anis}).

\subsection{Minimal non-$\mathscr P$ algebras}

Let $\mathscr P$ be a property of Lie algebras. A Lie algebra is called 
\emph{minimal non-$\mathscr P$}, if it does not satisfy $\mathscr P$, but any
its proper subalgebra satisfies $\mathscr P$. We will encounter 
\emph{minimal non-abelian}, \emph{minimal non-nilpotent}, and 
\emph{minimal non-regular} Lie algebras. The first two classes were studied
extensively in the literature, see \cite{farn-anis}, \cite{gein},
\cite{gein-book}, \cite{muller}, \cite{premet-cartan}, \cite{stitzinger},
\cite{towers-laa}, and \cite{mna}, among others; minimal non-regular Lie 
algebras will be discussed in \S \ref{s-min}.

The same notion can be considered for other classes of algebras. Here we will 
encounter \emph{minimal non-commutative} associative algebras; such algebras 
were studied in \cite{gein}.

\subsection{Field extensions, centroid, and central algebras}\label{ss-cent}

If $L$ is a Lie algebra over $K$, $K \subset F$ is a field extension, and $L$ 
has also an $F$-vector space structure, compatible with $K$, we can consider the
Lie algebra $L_F$ over $F$, called \emph{restriction over $F$} (not to be 
confused with the Lie algebra $L \otimes_K F$ obtained by \emph{extension} of 
the ground field from $K$ to $F$). It is easy to see that the $K$-algebra $L$ is
regular (respectively, anisotropic) if and only if the $F$-algebra $L_F$ is 
regular (respectively, anisotropic).

By $\Omega(L)$ we denote the centroid of an algebra $L$. An algebra $L$ is 
called \emph{central}, if $\Omega(L) = K$. For a simple algebra $L$, this is 
equivalent to the condition that the $F$-algebra $L \otimes_K F$ remains to be 
simple under any extension $F$ of the ground field $K$.

If $L$ is simple, then its centroid $\Omega(L)$ is a field, an extension of the
ground field $K$, and we can consider the algebra $L_{\Omega(L)}$ defined over 
$\Omega(L)$. The latter algebra is a central $\Omega(L)$-algebra.

If $L$ is simple and the ground field $K$ is perfect, then being extended to
a sufficiently large field $F$ (for example, $\Omega(L)$, or the algebraic 
closure $\overline K$), $L$ is decomposed into the direct sum of a number of
isomorphic copies of a central simple algebra, i.e. there is an isomorphism of 
$F$-algebras
$$
L \otimes_K F \simeq \overline S_1 \oplus \dots \oplus \overline S_n ,
$$ 
where $\overline S_i \simeq \overline S_j$ are central simple Lie algebras over 
$F$ (see, for example, \cite[\S 2]{eld}). Of course, if $L$ is central over $K$,
then $n=1$.

\subsection{Almost simple and classical algebras}\label{ss-as}

Recall that a Lie algebra $L$ is called \emph{almost simple}, if there is a 
simple subalgebra $S$ of $L$ such that $L$ is sandwiched between $S$ and its
derivation algebra $\Der(S)$:
\begin{equation}\label{eq-as}
S \subseteq L \subseteq \Der(S) .
\end{equation}

The simple algebra $S$ is called the \emph{simple constituent} of the almost
simple Lie algebra $L$. An almost simple Lie algebra is called \emph{classical},
if its simple constituent is classical.

If $S$ is central simple and classical, then we have $\Der(S) \simeq S$ in all 
the cases except when characteristic of the ground field $p>0$, and $S$ is of 
type $A_{kp-1}$ for some $k$. To see what is happening in these exceptional 
cases, assume that $S$ is split (which always can be achieved by passing to the
algebraic closure of the ground field).

Recall that the algebra of $n \times n$ matrices has a peculiarity if $n$ 
divides $p$; say, $n=kp$. In this case, the $kp \times kp$ identity matrix $E$ 
has trace zero, so the Lie algebra $\Sl_{kp}(K)$ of traceless matrices is no 
longer simple, but has the one-dimensional center linearly spanned by $E$. The 
quotient algebra 
$$
\psl_{kp}(K) = \Sl_{kp}(K)/KE
$$
is simple. Similarly, we can consider the quotient algebra 
$$
\pgl_{kp}(K) = \gl_{kp}(K)/KE
$$
which is semisimple and contains $\psl_{kp}(K)$ as an ideal of codimension $1$.
Moreover, 
$$
[\pgl_{kp}(K), \pgl_{kp}(K)] = \psl_{kp}(K) ,
$$
and 
$$
\Der(\psl_{kp}(K)) \simeq \pgl_{kp}(K)
$$
(see \cite[Chapter V, \S 5]{seligman} or \cite[p.~152]{premet}), so 
$\pgl_{kp}(K)$ is almost simple.

\subsection{Forms (of algebras over fields)}\label{ss-forms}

We will need a description of forms of simple classical Lie algebras of type 
$A$, i.e., algebras of the kind $\Sl_n(K)$ and $\psl_{kp}(K)$ (see, for example,
\cite[Chapter IV, \S 3]{seligman}). Any central simple form of 
$\Sl_n(\overline K)$ is either of the kind $[A^{(-)},A^{(-)}]$, where $A$ is a 
central simple associative algebra, or of the kind $[\sym^-(A,J),\sym^-(A,J)]$,
where $A$ is a simple associative algebra whose center is a quadratic extension
of $K$, $J$ is involution on $A$ of the second kind, and 
$\sym^-(A,J) = \set{a \in A}{J(a) = -a}$ is the Lie algebra of 
$J$-skew-symmetric elements of $A$. In both cases $A$ is of degree $n$ over its
center. If we are interested in forms of $\psl_{kp}(\overline K)$, we should 
everywhere take the quotient by the central ideal $K1$, where $1$ is the unit in the respective associative algebra $A$, getting in this way 
Lie algebras of the kind $[A^{(-)},A^{(-)}]/K1$ and 
$[\sym^-(A,J),\sym^-(A,J)]/K1$, respectively.

\section{Structure and properties of regular Lie algebras}\label{sec-regular}

The next three lemmas are elementary observations from \cite[\S 2]{mna} readily 
following from the properties of regular elements, as expounded, for example, in
\cite[Chapter VII, \S 2.2]{bourbaki}, and are valid in any characteristic.

\begin{lemma}\label{lemma-1}
Any nilpotent Lie algebra is regular.
\end{lemma}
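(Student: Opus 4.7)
The plan is to reduce everything to Engel's theorem. Recall that in a nilpotent Lie algebra $L$, every operator $\ad x$ is nilpotent (this is the ``only if'' direction of Engel's theorem, which is the easy one: if the lower central series $L \supseteq [L,L] \supseteq [L,[L,L]] \supseteq \dots$ terminates at zero after $k$ steps, then $(\ad x)^k = 0$ for every $x \in L$).

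First I would unwind what this says about the characteristic polynomial. Since $\ad x$ is nilpotent on a $(\dim L)$-dimensional space, its only eigenvalue (over $\overline{K}$) is zero, so
$$
\chi_{\ad x}(t) = t^{\dim L}
$$
for every $x \in L$. Reading off the coefficients in the definition
$\chi_{\ad x}(t) = \sum_{i=0}^{\dim L} a_i(x) t^i$, this gives $a_i(x) = 0$ for every $i < \dim L$, while $a_{\dim L}(x) = 1$, for \emph{every} $x \in L$ (including the nonzero ones).

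It remains to identify the rank. By the definition, $\rk L$ is the minimal $r$ for which $a_r(x)$ is nonzero for some $x$; the computation above forces $r = \dim L$, hence $\rk L = \dim L$. Then for any nonzero (in fact, for any) $x \in L$ we have $a_{\rk L}(x) = a_{\dim L}(x) = 1 \neq 0$, so $x$ is regular. This establishes the lemma. There is essentially no obstacle here — the statement is a one-line consequence of Engel's theorem together with the definition of rank — which is presumably why the author groups it with the next two ``elementary observations.''
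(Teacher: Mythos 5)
Your proof is correct and is exactly the elementary argument the paper has in mind: the paper gives no written proof for this lemma, merely noting that it follows readily from the properties of regular elements as in Bourbaki, and your computation (each $\ad x$ is nilpotent, so $\chi_{\ad x}(t)=t^{\dim L}$, forcing $\rk L=\dim L$ and making every element regular) is the standard way to fill that in.
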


\begin{lemma}\label{lemma-2}
A subalgebra of a regular Lie algebra is regular.
\end{lemma}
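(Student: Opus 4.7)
My plan is to compare the characteristic polynomials of $\ad_M x$ and $\ad_L x$ for $x$ in the subalgebra $M$. Since $M$ is $\ad_L x$-invariant for $x \in M$, with respect to any basis of $L$ compatible with the inclusion $M \subseteq L$ the matrix of $\ad_L x$ is block-triangular, which gives the factorization
\begin{equation*}
\chi_{\ad_L x}(t) = \chi_{\ad_M x}(t) \cdot q_x(t),
\end{equation*}
where $q_x(t)$ is the characteristic polynomial of the induced action of $\ad_L x$ on $L/M$.

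Denote by $r(x)$ and $s(x)$ the multiplicities of $0$ as roots of $\chi_{\ad_M x}(t)$ and $q_x(t)$, respectively; then the multiplicity of $0$ in $\chi_{\ad_L x}(t)$ equals $r(x) + s(x)$. By regularity of $L$, this sum equals $\rk L$ for every nonzero $x \in L$, hence in particular for every nonzero $x \in M$. Setting $r = \rk M$ and $s_0 = \min_{x \in M} s(x)$, each of the conditions $r(x) = r$ and $s(x) = s_0$ is the non-vanishing of a polynomial function on $M$, and therefore defines a Zariski-open dense subset of the affine space $M$ (using that $K$ is infinite). Their intersection must contain a nonzero point, at which the identity $r(x) + s(x) = \rk L$ yields $r + s_0 = \rk L$. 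For an arbitrary nonzero $x \in M$, the identity $r(x) + s(x) = r + s_0$ combined with $r(x) \ge r$ and $s(x) \ge s_0$ then forces $r(x) = r$, so $x$ is regular in $M$.

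The one point requiring slight care is the existence of a nonzero element of $M$ on which both minima $r$ and $s_0$ are attained simultaneously; this follows from the intersection of two nonempty Zariski-open subsets of $M$ being dense (and therefore containing nonzero points, unless $M = 0$, in which case the lemma is vacuous). The rest is essentially a rigidity argument: once $L$ is regular, the total multiplicity $r(x) + s(x)$ is pinned to the constant $\rk L$ across all of $M \setminus \{0\}$, leaving no room for $r(x)$ to exceed its generic value $r$.
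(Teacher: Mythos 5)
Your proof is correct. The paper gives no proof of this lemma (it only cites \cite[\S 2]{mna} and Bourbaki), but your argument --- factoring $\chi_{\ad_L x}(t)$ through the $\ad x$-invariant subspace $M$ exactly as in the paper's identity (\ref{eq-chi}), and then using that over an infinite field the two Zariski-open loci where $r(x)$ and $s(x)$ attain their minima have a common nonzero point, which pins $r(x)$ to $\rk M$ for every nonzero $x \in M$ --- is the standard argument being alluded to, and it is complete.
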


\begin{lemma}\label{lemma-3}
A non-semisimple regular Lie algebra is nilpotent. 
\end{lemma}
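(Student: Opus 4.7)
The plan is to reduce the statement to Engel's theorem by splitting on whether the rank equals the dimension. Write $\chi_{\ad x}(t) = \sum_{i=0}^{\dim L} a_i(x) t^i$; recall that $\rk L$ is the minimal $r$ with $a_r \not\equiv 0$ and that, by regularity, $a_{\rk L}(y) \ne 0$ for \emph{every} nonzero $y \in L$.

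\textbf{Case 1:} $\rk L = \dim L$. Then $a_i \equiv 0$ for every $i < \dim L$, so $\chi_{\ad x}(t) = t^{\dim L}$ for all $x \in L$. Hence every $\ad x$ is nilpotent, and Engel's theorem (valid in any characteristic) yields that $L$ is nilpotent, as desired.

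\textbf{Case 2:} $\rk L < \dim L$. I would show that this case is incompatible with $L$ being non-semisimple, thereby ruling it out. Fix any nonzero $y \in L$. By regularity, $\dim L^0(y) = \rk L < \dim L$, which means that zero is an eigenvalue of $\ad y$ (over $\overline K$) of multiplicity strictly less than $\dim L$. Consequently $\ad y$ possesses a nonzero eigenvalue, and in particular is \emph{not} nilpotent. Thus $L$ contains no nonzero $\ad$-nilpotent element. On the other hand, since $L$ is not semisimple, $\Rad(L) \ne 0$, and the last nonzero term of the derived series of $\Rad(L)$ is a nonzero abelian ideal $A \triangleleft L$. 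For any $a \in A$, since $A$ is an ideal, $[L,a] \subseteq A$, and since $A$ is abelian, $[A,a] = 0$; therefore $(\ad a)^2 L \subseteq [A,a] = 0$, so $a$ is $\ad$-nilpotent. This contradicts the previous conclusion, so Case 2 cannot occur, and $L$ is nilpotent by Case 1.

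There is no substantive obstacle; the only point to double-check is that the Fitting-theoretic interpretation of $\rk L$ forces every nonzero element to have a nonzero eigenvalue whenever $\rk L < \dim L$, and that the last nonzero derived ideal of the radical produces a nonzero abelian ideal whose elements square to zero under $\ad$ — both are standard and characteristic-free, which matches the claim that the lemma holds in any characteristic.
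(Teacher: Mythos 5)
Your proof is correct, and it is essentially the argument the paper has in mind (the paper itself defers to \cite{mna} and the Bourbaki facts about regular elements): a nonzero abelian ideal, extracted from the radical, consists of $\ad$-nilpotent elements, which by regularity forces $\rk L = \dim L$, and Engel's theorem finishes the job. The case split is a harmless reorganization of the same idea, and every step is indeed characteristic-free.
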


\begin{lemma}\label{lemma-4}
If $I$ is a nontrivial ideal of a regular Lie algebra $L$, then $L/I$ is 
nilpotent.
\end{lemma}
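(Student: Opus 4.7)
The plan is to exploit the Fitting decomposition with respect to a single nonzero element of $I$, combined with the classical fact that the Fitting $0$-component at a regular element is a Cartan subalgebra.

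First, pick any nonzero $x \in I$. Since $L$ is regular, so is $x$, and by the classical theorem of Bourbaki (\cite[Chapter VII, \S 2.3, Theorem 1]{bourbaki}), $L^0(x)$ is then a Cartan subalgebra of $L$, and in particular is nilpotent.

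Second, look at the Fitting $1$-component. For any $n \ge 1$, the image $(\ad x)^n(L)$ is contained in $[x,L]$, which lies in $I$ because $x \in I$ and $I$ is an ideal. Hence
$$
L^1(x) \;=\; \bigcap_{n \ge 0} (\ad x)^n(L) \;\subseteq\; I .
$$
Combined with the Fitting decomposition $L = L^0(x) \dotplus L^1(x)$, this shows that $L = L^0(x) + I$, so the natural projection $L \to L/I$ restricts to a surjection $L^0(x) \twoheadrightarrow L/I$. Since $L^0(x)$ is nilpotent, so is its homomorphic image $L/I$.

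There is no real obstacle: the argument is a one-line consequence of two standard facts about regular elements (the Fitting decomposition and the Cartan-subalgebra property of $L^0(x)$), together with the trivial observation $L^1(x) \subseteq I$. The only mild point to verify is that Bourbaki's theorem applies in arbitrary characteristic over the infinite ground field $K$ fixed in \S \ref{s-not}, which it does.
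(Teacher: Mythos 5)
Your proof is correct, but it takes a genuinely different route from the paper's. The paper argues via the multiplicativity of characteristic polynomials along the $\ad x$-invariant subspace $I$: comparing the identity $\chi_{\ad x}(t) = \chi_{\ad_I x}(t)\,\chi_{\ad_{L/I}(\overline{x})}(t)$ for a regular $x \notin I$ (which gives $\rk L = \rk I + \rk L/I$) with the same identity for a nonzero $x \in I$ (where $\overline{x} = 0$, which gives $\rk L = \rk I + \dim L/I$), it deduces $\rk L/I = \dim L/I$, i.e. every adjoint map of $L/I$ is nilpotent, whence $L/I$ is nilpotent by Engel's theorem. You instead fix a single nonzero $x \in I$, observe that $L^1(x) \subseteq [x,L] \subseteq I$, and conclude that $L/I$ is a homomorphic image of $L^0(x)$, which is nilpotent because $x$ is regular and $K$ is infinite. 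Both arguments ultimately rest on the regularity of a nonzero element of the ideal; yours trades the rank bookkeeping (and the implicit genericity argument needed to find an $x \notin I$ realizing $\rk L = \rk I + \rk L/I$) for the single classical fact that the Fitting $0$-component of a regular element is a Cartan, hence nilpotent, subalgebra. Your version is arguably the more economical of the two, and all the ingredients you invoke are valid in arbitrary characteristic over the infinite field fixed in the paper.
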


\begin{proof}
First note that $L/I$ is a regular Lie algebra (as follows, for example, from
\cite[Chapter VII, \S 2.2, Proposition 8]{bourbaki}).

Since for any element $x\in L$, the ideal $I$ is an $\ad x$-invariant subspace 
of $L$, we have
\begin{equation}\label{eq-chi}
\chi_{\ad x}(t) = \chi_{\ad_I x}(t) \, \chi_{\ad_{L/I} (\overline x)}(t) ,
\end{equation}
where $\ad_I x$ is the restriction of $\ad x$ to $I$, $\overline x$ is the image
of $x$ under the canonical homomorphism $L \to L/I$, and $\ad_{L/I} (\overline x)$
is its adjoint map acting on the algebra $L/I$.

Picking $x \notin I$, we get from (\ref{eq-chi}) 
$$
\rk L = \rk I + \rk L/I .
$$ 
On the other hand, if $x \in I$, $x \ne 0$, then 
$\overline x = 0$, $\chi_{\ad_{L/I} (\overline x)}(t) = t^{\dim L/I}$, and 
(\ref{eq-chi}) implies
$$
\rk L = \rk I + \dim L/I .
$$
Consequently, $\rk L/I = \dim L/I$, and hence $L/I$ is nilpotent.
\end{proof}

\begin{lemma}\label{lemma-almost}
A semisimple regular Lie algebra is almost simple.
\end{lemma}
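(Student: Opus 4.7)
The plan is to pick a minimal ideal $M$ of $L$, show that $M$ is simple, verify it is the unique minimal ideal, and then conclude via $\C_L(M)=0$ that $L$ embeds into $\Der(M)$, yielding the sandwich $M\subseteq L\subseteq \Der(M)$ that exhibits $L$ as almost simple.

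First I would prove that $M$ is simple. Since $L$ is semisimple, $M$ cannot be abelian, so the characteristic ideal $[M,M]$ is a nonzero ideal of $L$ sitting inside $M$, and therefore equals $M$ by minimality. Now $M$ is itself regular by Lemma~\ref{lemma-2}, so if $I$ were a proper nontrivial ideal of $M$, Lemma~\ref{lemma-4} applied to $M$ would force $M/I$ to be nilpotent; but the perfectness $[M,M]=M$ passes to $[M/I,M/I]=M/I$, and a perfect nilpotent Lie algebra is zero. Hence $M$ has no proper nontrivial ideals, i.e.\ $M$ is simple.

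Next I would show $M$ is the unique minimal ideal of $L$ (the degenerate case $M=L$ makes $L$ itself simple, hence trivially almost simple). Assuming $M\ne L$, Lemma~\ref{lemma-4} applied to $L$ yields that $L/M$ is nilpotent; any other minimal ideal $M'$ satisfies $M\cap M'=0$, so its image in $L/M$ is isomorphic to $M'$ and therefore nilpotent. But $M'$ is again non-abelian (otherwise $M'$ would be an abelian ideal of $L$), hence perfect by the same minimality argument, so it cannot be nilpotent unless $M'=0$. Knowing that $M$ is the unique minimal ideal, the centralizer $\C_L(M)$ must vanish: it is an ideal of $L$ and, if nonzero, would contain the minimal ideal $M$, forcing the contradiction $[M,M]=0$.

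Combining, $L$ acts faithfully on $M$ by derivations, so $M\subseteq L\subseteq \Der(M)$ displays $L$ as almost simple with simple constituent $M$. The main obstacle I expect is the first step, which rules out the possibility, typical in positive characteristic, that the minimal ideal is a direct sum of isomorphic simple factors transitively permuted by $L$ rather than a single simple algebra; regularity enters precisely here, through Lemma~\ref{lemma-4} applied to $M$ and the resulting clash between nilpotency of $M/I$ and the perfectness $[M,M]=M$.
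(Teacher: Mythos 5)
Your proof is correct, but it takes a genuinely different route from the paper's. The paper splits by characteristic: in characteristic zero it uses the decomposition of a semisimple algebra into a direct sum of simples plus Lemma~\ref{lemma-4}, and in characteristic $p>0$ it invokes the Block theorem, obtaining the sandwich $\bigoplus_i (S_i \otimes O_{n_i}) \subseteq L \subseteq \bigoplus_i (\Der(S_i)\otimes O_{n_i} \dotplus \Der(O_{n_i}))$ and then using Lemmas~\ref{lemma-2}, \ref{lemma-4}, and \ref{lemma-3} to force $|\mathbb I|=1$ and $n=0$. You instead argue directly and uniformly in all characteristics: a minimal ideal $M$ is non-abelian by semisimplicity, hence perfect by minimality; it is regular by Lemma~\ref{lemma-2}, so Lemma~\ref{lemma-4} applied to $M$ itself forces any proper nontrivial ideal to have a quotient that is simultaneously nilpotent and perfect, whence $M$ is simple; Lemma~\ref{lemma-4} applied to $L$ makes $L/M$ nilpotent and kills any second minimal ideal (again nilpotent versus perfect); and uniqueness gives $\C_L(M)=0$, so $L$ embeds in $\Der(M)$ with image containing $\ad(M)\simeq M$ (using $\Z(M)=0$). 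All the individual steps check out, including the standard facts you use implicitly ($[M,M]$ and $\C_L(M)$ are ideals of $L$, a perfect nilpotent algebra vanishes, every nonzero ideal contains a minimal one). What your approach buys is the elimination of the Block theorem, making the lemma elementary and characteristic-free; what the paper's approach buys is that the Block-theorem reasoning is set up once and then reused verbatim in two later places (Case~2 of the implication (ii)$\Rightarrow$(iii) and Case~2 of (iii)$\Rightarrow$(i) in Theorem~\ref{th-subalg}), so the author gets those occurrences for free, whereas your argument would need to be adapted there since in those cases one only knows that proper subalgebras, not $L$ itself, are regular.
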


\begin{proof}
Let $L$ be a semisimple regular Lie algebra. In characteristic zero, $L$ is the
direct sum of simple algebras, and by Lemma \ref{lemma-4}, $L$ is simple.

In characteristic $p>0$, by the Block theorem about the structure of semisimple 
Lie algebras,
\begin{equation}\label{eq-block}
\bigoplus_{i\in \mathbb I} \big(S_i \otimes O_{n_i}\big) \subseteq L \subseteq 
\bigoplus_{i\in \mathbb I} 
\big(\Der(S_i) \otimes O_{n_i} \dotplus \Der(O_{n_i})\big)
\end{equation}
for some finite set of simple Lie algebras $\{S_i\}_{i\in \mathbb I}$, and some 
nonnegative integers $\{n_i\}_{i\in \mathbb I}$. Here
$$
O_n = K[x_1,\dots,x_n]/(x_1^p, \dots, x_n^p)
$$
denotes the reduced polynomial algebra in $n$ variables. By Lemma \ref{lemma-2},
the direct sum $\bigoplus_{i\in \mathbb I} (S_i \otimes O_{n_i})$ is regular, 
and by Lemma \ref{lemma-4}, it consists of a just one summand, i.e., 
$|\mathbb I| = 1$ and
$$
S \otimes O_n \subseteq L \subseteq \Der(S) \otimes O_n \dotplus \Der(O_n)
$$
for suitable $S$ and $n$. Again, $S \otimes O_n$ is regular, and by 
Lemma \ref{lemma-3} we have $n=0$, i.e., $O_n = K$ and (\ref{eq-as}) holds.
\end{proof}

Now we will establish a connection of regular Lie algebras with another class
of Lie algebras, namely, anisotropic Lie algebras. 

\begin{lemma}\label{lemma-anis-sub}
A subalgebra of an anisotropic Lie algebra is anisotropic.
\end{lemma}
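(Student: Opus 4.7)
The plan is to show that anisotropy is inherited by subalgebras through the standard fact that semisimplicity of a linear operator passes to its restriction to any invariant subspace. Specifically, let $M$ be a subalgebra of an anisotropic Lie algebra $L$, and pick an arbitrary $x \in M$. Since $M$ is a subalgebra, $[x,M] \subseteq M$, so $M$ is an $\ad_L x$-invariant subspace of $L$, and the map $\ad_M x$ acting on $M$ is just the restriction $(\ad_L x)|_M$.

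The key input is the linear-algebraic observation that if a linear operator $T$ on a finite-dimensional vector space is semisimple (i.e.\ its minimal polynomial has no repeated irreducible factors, equivalently, $T$ is diagonalizable over the algebraic closure when the field is perfect, or more generally over a separable extension), and $W$ is a $T$-invariant subspace, then $T|_W$ is also semisimple. This follows because the minimal polynomial of $T|_W$ divides the minimal polynomial of $T$, and any divisor of a separable polynomial is separable.

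Applying this with $T = \ad_L x$ (semisimple by hypothesis that $L$ is anisotropic) and $W = M$, we conclude that $\ad_M x$ is semisimple. Since $x \in M$ was arbitrary, $M$ is anisotropic.

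There is no real obstacle here; the lemma is essentially a bookkeeping statement, and the only point worth keeping in mind is that semisimplicity must be interpreted via the separability of the minimal polynomial (rather than outright diagonalizability) so that the argument works over an arbitrary, not necessarily perfect, ground field.
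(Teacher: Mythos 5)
Your proof is correct and follows the same route as the paper: both argue that $M$ is an $\ad_L x$-invariant subspace and that the restriction of a semisimple operator to an invariant subspace remains semisimple. Your additional remark justifying this via divisibility of minimal polynomials is a sound elaboration of the step the paper takes for granted.
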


\begin{proof}
Let $S$ be a subalgebra in anisotropic Lie algebra $L$. For any $x \in S$, the
restriction of the semisimple linear map $\ad x: L \to L$, to the invariant 
subspace $S$ is also semisimple, thus $x$ is semisimple element in $S$.
\end{proof}

\begin{lemma}\label{lemma-solv}
A solvable nilpotent-free Lie algebra is abelian.
\end{lemma}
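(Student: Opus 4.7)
My plan is to proceed by induction on $\dim L$, the base case $\dim L \le 1$ being trivial. The guiding observation is that in a nilpotent-free Lie algebra every ad-nilpotent element is already central, so I just need enough ad-nilpotency to swallow all of $L$.

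First I would show that the center of $L$ is nontrivial. Since $L$ is solvable, the last nonzero term $I=L^{(n)}$ of the derived series is a nonzero abelian ideal. For any $x\in I$ and $y\in L$ we have $[x,y]\in I$ because $I$ is an ideal, and then $[x,[x,y]]=0$ because $I$ is abelian; hence $(\ad x)^2=0$. Thus every $x\in I$ is ad-nilpotent, and the nilpotent-free hypothesis forces $I\subseteq \Z(L)$. In particular, $\Z(L)\ne 0$.

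Next, pick any nonzero $z\in \Z(L)$ and set $J=Kz$, a one-dimensional ideal of $L$. The quotient $L/J$ is again solvable, and I would verify that it remains nilpotent-free: if $\overline{x}\in L/J$ is ad-nilpotent, then $(\ad x)^n L\subseteq J\subseteq \Z(L)$ for some $n$, which gives $(\ad x)^{n+1}L=0$; hence $x$ is ad-nilpotent in $L$, so $x\in \Z(L)$ by hypothesis, and $\overline{x}\in \Z(L/J)$. Since $\dim L/J<\dim L$, the induction hypothesis yields that $L/J$ is abelian, so $[L,L]\subseteq J\subseteq \Z(L)$.

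Finally, from $[L,L]\subseteq \Z(L)$ it follows that $(\ad x)^2(y)=[x,[x,y]]=0$ for all $x,y\in L$, so every element of $L$ is ad-nilpotent. The nilpotent-free hypothesis then puts all of $L$ into $\Z(L)$, and $L$ is abelian. The only step I anticipate as delicate is checking that the nilpotent-free property survives the quotient $L\to L/J$, but the one-dimensionality (and centrality) of $J$ makes the lift of ad-nilpotency automatic, so the argument goes through in any characteristic without further hypotheses on $K$.
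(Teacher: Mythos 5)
Your proof is correct, and it is genuinely self-contained, whereas the paper does not give an argument at all: it simply points the reader to the proof of Proposition~1.2 in Farnsteiner's paper on ad-semisimple Lie algebras, remarking that the argument there works under the formally weaker ``nilpotent-free'' hypothesis. Your induction on $\dim L$ is clean: the last nonzero term of the derived series consists of elements $x$ with $(\ad x)^2=0$, hence is central, so $\Z(L)\ne 0$; a one-dimensional central ideal $J$ can be factored out because ad-nilpotency modulo a central ideal lifts to ad-nilpotency in $L$ (this is the one step that needed checking, and you checked it); and $[L,L]\subseteq \Z(L)$ forces every element to be ad-nilpotent, hence central. A small remark: the dimension induction can be avoided entirely, since once you know $L^{(n)}\subseteq \Z(L)$ you get, for $x\in L^{(n-1)}$ and $y\in L$, that $[x,[x,y]]\in[L^{(n-1)},L^{(n-1)}]=L^{(n)}\subseteq \Z(L)$, so $(\ad x)^3=0$ and $L^{(n-1)}\subseteq \Z(L)$, whence $L^{(n)}=0$; descending the derived series this way kills it in one stroke. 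Either way, your argument is characteristic-free and requires nothing of $K$, exactly as the lemma is stated, so it is a perfectly acceptable replacement for the external citation.
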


\begin{proof}
See the proof of \cite[Proposition 1.2]{farn-anis} (where a formally weaker 
assertion is stated -- that a solvable anisotropic Lie algebra is abelian).
\end{proof}

\begin{lemma}\label{lemma-id}
An ideal of a nilpotent-free Lie algebra is nilpotent-free.
\end{lemma}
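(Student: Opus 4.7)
The plan is to unravel the definitions and observe that nilpotency of $\ad x$ on $I$ forces nilpotency of $\ad x$ on the whole of $L$, at which point the hypothesis on $L$ gives the conclusion immediately.

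Concretely, let $L$ be nilpotent-free and let $I \trianglelefteq L$ be an ideal. Pick an arbitrary element $x \in I$ which is nilpotent in $I$, i.e.\ such that $\ad_I x$ is a nilpotent endomorphism of $I$. I want to show $x \in \Z(I)$. The first step is to promote nilpotency from $I$ to $L$: since $I$ is an ideal, the map $\ad_L x$ sends all of $L$ into $I$, so its image lies inside $I$; consequently, for any $n \ge 0$ one has $(\ad_L x)^{n+1} = (\ad_I x)^n \circ \ad_L x$ as endomorphisms of $L$. If $(\ad_I x)^n = 0$ then $(\ad_L x)^{n+1} = 0$, so $x$ is a nilpotent element of $L$.

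The second step applies the nilpotent-free hypothesis on $L$: the element $x$ lies in $\Z(L)$, so $[x, L] = 0$, and in particular $[x, I] = 0$. Hence $x \in \Z(I)$, which is exactly what was needed.

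There is really no obstacle here; the only subtlety worth flagging is making sure the nilpotency notion is interpreted ambient-freely, so that ``nilpotent in $I$'' refers to $\ad_I x$ rather than $\ad_L x$ a priori. The short calculation above shows these two notions agree whenever $x \in I$ and $I$ is an ideal, which is what makes the implication trivial.
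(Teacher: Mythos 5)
Your proof is correct and follows essentially the same route as the paper: both arguments use that $\ad x$ maps $L$ into the ideal $I$ to get $(\ad_L x)^{n+1}(L) \subseteq (\ad_I x)^n(I) = 0$, and then invoke the nilpotent-free hypothesis on $L$ to conclude $x$ is central. Nothing is missing.
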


\begin{proof}
Let $I$ be an ideal of a nilpotent-free Lie algebra $L$. If $x \in I$ such that
$(\ad x)^n (I) = 0$ for some $n$, then
$$
(\ad x)^{n+1}(L) = (\ad x)^n [L,x] \subseteq (\ad x)^n(I) = 0 ,
$$
and hence $\ad x = 0$ .
\end{proof}

\begin{lemma}\label{lemma-anis} (The ground field $K$ is perfect).
A Lie algebra is anisotropic if and only if it is nilpotent-free.
\end{lemma}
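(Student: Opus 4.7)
The forward direction is immediate: if $L$ is anisotropic and $x \in L$ satisfies $(\ad x)^k = 0$, then $\ad x$ is simultaneously semisimple and nilpotent, hence zero, so $x \in \Z(L)$.

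For the converse, let $L$ be nilpotent-free and fix $x \in L$. Since $K$ is perfect, the Jordan--Chevalley decomposition of $\ad x$ in $\gl(L)$ reads $\ad x = s + n$, with $s$ semisimple, $n$ nilpotent, $[s,n]=0$, and both $s$ and $n$ polynomials in $\ad x$ with no constant term; in particular $s, n \in \Der(L)$. The aim is to show $n = 0$. If one could realize $n$ as an inner derivation $n = \ad y$ for some $y \in L$, then $y$ would be a nilpotent element of $L$, forcing $y \in \Z(L)$ by hypothesis and hence $n = 0$. Proving that $n$ is inner is the main obstacle, and I would address it by reducing $L$ structurally.

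The first reduction is to show $\Rad(L) = \Z(L)$. Indeed, $\Rad(L)$ is nilpotent-free by Lemma~\ref{lemma-id} and solvable, hence abelian by Lemma~\ref{lemma-solv}. For $r \in \Rad(L)$, the inclusions $[r,L] \subseteq \Rad(L)$ and $[\Rad(L),\Rad(L)] = 0$ give $(\ad r)^2 = 0$; nilpotent-freeness then yields $\ad r = 0$, so $r \in \Z(L)$. Thus $L/\Z(L)$ is semisimple, and it remains nilpotent-free: from $(\ad x)^k L \subseteq \Z(L)$ one obtains $(\ad x)^{k+1} L = 0$, forcing $x \in \Z(L)$.

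For the semisimple quotient $L/\Z(L)$, Block's structure theorem~(\ref{eq-block}) combined with nilpotent-freeness forces the truncated polynomial thickenings $O_{n_i}$ to be trivial, since any nontrivial thickening produces nilpotent non-central elements of the form $s \otimes x_j$ with $s$ non-central in a simple summand and $x_j^p = 0$. Thus $L/\Z(L)$ is sandwiched between $\bigoplus S_i$ and $\bigoplus \Der(S_i)$ for some simple $S_i$, each of which is itself nilpotent-free as an ideal. For classical simple algebras over perfect fields, the abstract Jordan decomposition is available inside the algebra, so each $\ad s$ on $S_i$ is semisimple, and anisotropy of $L/\Z(L)$ follows from the diagonal structure. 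The final, subtlest step is to transfer anisotropy from $L/\Z(L)$ back to $L$: since $\ad x$ is zero on $\Z(L)$ and semisimple on $L/\Z(L)$, the only possible obstruction is a size-two Jordan block at the zero eigenvalue joining $L/\Z(L)$ to $\Z(L)$, i.e.\ the existence of $y \in L$ with $[x,y] \in \Z(L) \setminus \{0\}$. I expect ruling this out to be the main technical difficulty and to require a further application of the nilpotent-free hypothesis to a carefully chosen auxiliary element.
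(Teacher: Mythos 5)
Your forward direction and your first reduction ($\Rad(L)=\Z(L)$ via Lemmas~\ref{lemma-id} and~\ref{lemma-solv}, hence $L/\Z(L)$ semisimple and nilpotent-free) agree with the paper and are fine; in characteristic zero this essentially finishes the proof, because $L$ is then reductive and the abstract Jordan--Chevalley decomposition lives inside $L$ itself, so the nilpotent part of $\ad x$ is $\ad$ of a nilpotent, hence central, element. But in positive characteristic your argument has a genuine gap, and it is exactly the step you flag at the end: nothing you have done rules out $y\in L$ with $[x,y]\in \Z(L)\setminus\{0\}$, i.e.\ a size-two Jordan block of $\ad x$ at eigenvalue $0$ linking $L/\Z(L)$ to $\Z(L)$. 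In characteristic zero this cannot happen because Levi's theorem splits $L$ as $\Z(L)\oplus[L,L]$; in characteristic $p$ there is no Levi decomposition, the central extension $0\to \Z(L)\to L\to L/\Z(L)\to 0$ can be nonsplit (think of $\Sl_{kp}$ over $\psl_{kp}$), and semisimplicity of $\ad x$ on the quotient simply does not lift. Since this is the entire content of the lemma in characteristic $p$, the proof is incomplete. There are also secondary gaps in the middle: Block's theorem gives arbitrary simple $S_i$, and you need Premet's absolute-zero-divisor theorem (cited in the paper as \cite{premet-bssr}) to know they are forms of classical algebras; moreover even the classical algebra $\psl_{kp}$ is not restricted, so the ``abstract Jordan decomposition inside the algebra'' is not available there without further work, and elements of $L/\Z(L)$ involving outer derivations are not covered by ``the diagonal structure.''

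The paper's proof goes in the opposite direction: instead of passing \emph{down} to $L/\Z(L)$ (where semisimplicity of adjoint maps does not lift back), it passes \emph{up} to the universal $p$-envelope $R(L)$. One checks that $R(L)$ inherits nilpotent-freeness from $L$ (a nilpotent element of $R(L)$ is traced back to a $[p]^k$-th power of a nilpotent, hence central, element of $L$), and in the restricted Lie algebra $R(L)$ the Jordan--Chevalley--Seligman decomposition is available internally: every element is a sum of a semisimple and a $p$-nilpotent element, the latter being central by nilpotent-freeness. Thus $R(L)$ is anisotropic, and anisotropy passes down to the subalgebra $L$ by Lemma~\ref{lemma-anis-sub} --- restriction of a semisimple map to an invariant subspace stays semisimple over a perfect field. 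That last inheritance is exactly what fails for quotients, which is why the envelope, and not the central quotient, is the right device here.
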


\begin{proof}
The ``only if'' part is obvious, so let us prove the ``if'' part. Let $L$ be a
nilpotent-free Lie algebra. In characteristic zero, Lemma \ref{lemma-id} implies
that the radical $\Rad(L)$ of $L$ is nilpotent-free, and Lemma \ref{lemma-solv}
implies that $\Rad(L)$ is abelian. Then for any $x \in \Rad(L)$, 
$(\ad x)^2 = 0$, hence $x$ is central and $L$ is either reductive or abelian.
But a reductive Lie algebra contains semisimple and nilpotent parts in the 
Jordan--Chevalley decomposition of each its element (see, for example, 
\cite[Chapter VII, \S 5.1, Proposition 2]{bourbaki}). Thus any element in $L$ is
a sum of a semisimple and a central element, and hence is semisimple.

In the case of positive characteristic, consider the universal $p$-envelope 
$R(L)$ of $L$ (\cite[\S 1]{11} or \cite[Chapter 2, \S 5]{strade-farn}). Let 
$x \in R(L)$ be a nilpotent element. We may assume then that $x^{[p]^n} = 0$ for
some $n$. As there is $y \in L$ such that $x = y^{[p]^k}$ for some $k$, we have 
$y^{[p]^{n+k}} = 0$, thus $y$ is a nilpotent element of $L$, hence $y$ belongs 
to the center of $L$, and $x = y^{[p]^k}$ belongs to the center of $R(L)$. Consequently, $R(L)$ is
nilpotent-free. Since $R(L)$ is a Lie $p$-algebra, any its element decomposes
into the sum of a semisimple and a $p$-nilpotent (and hence nilpotent) element
(the so called Jordan--Chevalley--Seligman decomposition, see 
\cite[Theorem V.7.2]{seligman} or \cite[Chapter 2, Theorem 3.5]{strade-farn}). 
Then, as in the case of zero characteristic, $R(L)$ is anisotropic. By 
Lemma~\ref{lemma-anis-sub}, $L$ is anisotropic.
\end{proof}

\begin{lemma}\label{lemma-nn} (The ground field $K$ is perfect).
A non-nilpotent regular Lie algebra is anisotropic.
\end{lemma}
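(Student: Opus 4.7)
The plan is to combine the structural lemmas already proved (Lemmas \ref{lemma-3} and \ref{lemma-anis}) with a direct computation of the characteristic polynomial of a nilpotent element. Let $L$ be a non-nilpotent regular Lie algebra. The first step is to observe that by Lemma \ref{lemma-3}, $L$ must be semisimple: a non-semisimple regular algebra is nilpotent, which contradicts our hypothesis. In particular, the center of $L$ is trivial.

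Next, I would argue that $\rk L < \dim L$. Indeed, $\rk L = \dim L$ would mean $a_i(x) = 0$ for all $x \in L$ and all $i < \dim L$, which forces $\ad x$ to be nilpotent for every $x \in L$; by Engel's theorem $L$ would then be nilpotent, contradicting the hypothesis.

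The crucial step is then to show that $L$ is nilpotent-free. Let $x \in L$ be any (ad-)nilpotent element. Since $\ad x$ is a nilpotent linear operator, its characteristic polynomial is
$$
\chi_{\ad x}(t) = t^{\dim L},
$$
so in the notation of \S \ref{s-not} we have $a_i(x) = 0$ for every $i < \dim L$. In particular $a_{\rk L}(x) = 0$, so $x$ fails to be a regular element of $L$. Since $L$ is regular, this forces $x = 0$. Hence $L$ has no nonzero nilpotent elements (which is in fact stronger than being nilpotent-free, since $\Z(L) = 0$).

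Finally, since $K$ is perfect, Lemma \ref{lemma-anis} converts the nilpotent-free property into anisotropy, completing the proof. I do not anticipate any genuine obstacle: the only subtlety worth flagging is the implicit appeal to Engel's theorem to guarantee $\rk L < \dim L$, and the fact that passing from "nilpotent-free" to "anisotropic" is nontrivial in positive characteristic and relies on the universal $p$-envelope argument packaged inside Lemma \ref{lemma-anis}.
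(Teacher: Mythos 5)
Your proof is correct and follows essentially the same route as the paper: the paper's proof simply asserts that in a non-nilpotent regular Lie algebra "any nilpotent element vanishes" (hence the algebra is nilpotent-free) and then invokes Lemma \ref{lemma-anis}, and your computation with $\chi_{\ad x}(t) = t^{\dim L}$ together with the Engel-theorem observation that $\rk L < \dim L$ is exactly the justification the paper leaves implicit. The opening appeal to Lemma \ref{lemma-3} for semisimplicity is not needed for the main argument, but it does no harm.
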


\begin{proof}
A non-nilpotent regular Lie algebra is obviously nilpotent-free (in fact, any
nilpotent element vanishes), and by Lemma \ref{lemma-anis} is anisotropic.
\end{proof}

\begin{theorem}\label{th-as} 
(The ground field $K$ is perfect of characteristic $p \ne 2,3$).
A non-nilpotent regular Lie algebra $L$ is a form of an almost simple classical
Lie algebra. Moreover, if the simple constituent of $L$ is central simple, then
$L$ is a form of either a simple classical Lie algebra, or of an algebra of the
kind $\pgl_{kp}(\overline K)$.
\end{theorem}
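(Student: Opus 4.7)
The plan is to assemble the three immediately preceding lemmas with Premet's Theorem A. Since $L$ is non-nilpotent and regular, Lemma \ref{lemma-3} gives that $L$ is semisimple, and Lemma \ref{lemma-nn} gives that $L$ is anisotropic. A semisimple Lie algebra has no nontrivial abelian ideals, hence has trivial center, so Theorem A applies and yields
$$
S_1 \oplus \dots \oplus S_n \subseteq L \subseteq \Der(S_1) \oplus \dots \oplus \Der(S_n)
$$
for some forms $S_i$ of classical simple Lie algebras.

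To reduce to a single summand, I would apply Lemma \ref{lemma-4} to the nontrivial ideal $S_1$: the quotient $L/S_1$ must be nilpotent. But if $n\ge 2$ then the simple algebra $S_2$ embeds as a subalgebra of $L/S_1$, and subalgebras of nilpotent Lie algebras are nilpotent while a simple Lie algebra is not. So $n=1$ and $S \subseteq L \subseteq \Der(S)$ with $S$ a form of a classical simple Lie algebra; this is exactly the statement that $L$ is an almost simple classical Lie algebra in the sense of \S \ref{ss-as}, which establishes the first assertion. (One can equivalently invoke Lemma \ref{lemma-almost} at this stage in place of the direct argument with Lemma \ref{lemma-4}.)

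For the second claim, suppose the simple constituent $S$ is central simple. By the discussion in \S \ref{ss-as}, $\Der(S) \simeq S$ unless $S$ is of type $A_{kp-1}$. In the generic case, identifying $S$ with $\ad(S)\subseteq\Der(S)$, the sandwich collapses to $L=S$, which is a form of a simple classical Lie algebra, and we are done. In the exceptional case I would extend scalars to $\overline K$: because $S$ is central simple, $S \otimes_K \overline K \simeq \psl_{kp}(\overline K)$, and applying the isomorphism $\Der(\psl_{kp}(\overline K)) \simeq \pgl_{kp}(\overline K)$ recorded in \S \ref{ss-as} one obtains $\Der(S) \otimes_K \overline K \simeq \pgl_{kp}(\overline K)$. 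The chain $S \subseteq L \subseteq \Der(S)$ then base-changes to
$$
\psl_{kp}(\overline K) \subseteq L \otimes_K \overline K \subseteq \pgl_{kp}(\overline K),
$$
in which the outer inclusion has codimension one. Hence $L \otimes_K \overline K$ is either the simple algebra $\psl_{kp}(\overline K)$ or $\pgl_{kp}(\overline K)$, as required.

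The main technical point is the compatibility $\Der(S) \otimes_K \overline K \simeq \Der(S \otimes_K \overline K)$ used in the exceptional step. In general this commutation with scalar extension may fail for Lie algebras, but it holds when $S$ is central simple, which is exactly the hypothesis of the second part; this is what confines the appearance of $\pgl_{kp}$ to the central case and excludes it from the general statement.
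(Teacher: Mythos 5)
Your proposal is correct and follows essentially the same route as the paper: Lemma \ref{lemma-3} and Lemma \ref{lemma-nn} reduce to the anisotropic centerless case, Theorem A gives the sandwich $S_1 \oplus \dots \oplus S_n \subseteq L \subseteq \Der(S_1) \oplus \dots \oplus \Der(S_n)$, and base change to $\overline K$ together with the fact that $\psl_{kp}(\overline K)$ is the only simple classical algebra with outer derivations, of codimension one in $\pgl_{kp}(\overline K)$, yields the second assertion. The only divergence is that you obtain $n=1$ by applying Lemma \ref{lemma-4} to the ideal $S_1$ of Premet's decomposition, whereas the paper first invokes Lemma \ref{lemma-almost} (resting on Block's theorem) to get almost simplicity before applying Theorem A; both are valid, and yours is marginally more self-contained at that step.
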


\begin{proof}
By Lemma \ref{lemma-3} and Lemma \ref{lemma-almost}, $L$ is almost simple. In 
characteristic zero, almost simplicity is equivalent to simplicity, and any 
simple Lie algebra is a form of a classical one.

In the case of characteristic $p>3$, by Lemma \ref{lemma-nn}, $L$ is 
anisotropic. Apply Theorem A. Since $L$ is almost simple, in the inclusion
(\ref{eq-s}) we have $n=1$. Putting $S = S_1$ and passing to the algebraic 
closure of the ground field, we have
$$
S \otimes_K \overline K \subseteq L \otimes_K \overline K \subseteq 
\Der(S \otimes_K \overline K) .
$$

If $S$ is central, then $S \otimes_K \overline K$ is simple. The only simple 
classical Lie algebras which have outer derivations, are Lie algebras 
$\psl_{kp}(\overline K)$ for some nonnegative integer $k$. Hence either 
$L \otimes_K \overline K = S \otimes_K \overline K$, or 
$S \otimes_K \overline K \simeq \psl_{kp}(\overline K)$. In the latter case,
since $\psl_{kp}(\overline K)$ is of codimension one in 
$\Der(\psl_{kp}(\overline K)) \simeq \pgl_{kp}(\overline K)$, we have that 
$L \otimes_K \overline K$ is isomorphic to either $\psl_{kp}(\overline K)$, or 
to $\pgl_{kp}(\overline K)$.
\end{proof}

The following was proved as \cite[Theorem 3]{mna} in the zero characteristic 
case. Comparing the proof below with the proof of \cite[Theorem 3]{mna}, the 
reader can appreciate how much more efforts are required to handle the case of
positive characteristic.

\begin{theorem}\label{th-subalg} 
(The ground field $K$ is perfect of characteristic $\ne 2,3$).
Let $L$ be a non-solvable Lie algebra with the trivial center. Then the 
following are equivalent:
\begin{enumerate}[\upshape(i)]
\item $L$ is regular;
\item any proper subalgebra of $L$ is regular;
\item any proper subalgebra of $L$ is either almost simple, or abelian.
\end{enumerate}
\end{theorem}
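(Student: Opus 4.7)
The plan is to use $(i)$ as a hub and prove the four implications $(i) \Rightarrow (ii)$, $(i) \Rightarrow (iii)$, $(iii) \Rightarrow (i)$, and $(ii) \Rightarrow (i)$, which together yield the desired equivalence. The first is immediate from Lemma~\ref{lemma-2}. For $(i) \Rightarrow (iii)$: since $L$ is regular and non-solvable, Lemmas~\ref{lemma-3} and~\ref{lemma-almost} force $L$ to be almost simple, and Lemma~\ref{lemma-nn} that it is anisotropic. For a proper subalgebra $S$ (regular by Lemma~\ref{lemma-2}): if $S$ is non-nilpotent, Theorem~\ref{th-as} makes it almost simple; if $S$ is nilpotent, then for every $x \in S$ the operator $\ad_S x$ is nilpotent by Engel's theorem, and also semisimple as the restriction of the semisimple $\ad x$ on the anisotropic $L$ to the invariant subspace $S$, hence zero, so $S$ is abelian.

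For $(iii) \Rightarrow (i)$: since $\Z(L) = 0$, every centralizer $\C_L(x)$ with $x \ne 0$ is a proper subalgebra of $L$ containing $x$ in its center; almost simple algebras have trivial center, so $(iii)$ forces $\C_L(x)$ to be abelian. The next step is to show $L$ is anisotropic via Lemma~\ref{lemma-anis}: a hypothetical nilpotent $z \ne 0$ produces, through the Fitting filtration of $\ad z$, an element $w = [z,y] \in \C_L(z) \setminus \{0\}$ with $y \in \ker(\ad z)^2 \setminus \ker\ad z$, and iteratively $u = [y,w], v = [y,u], \ldots \in \C_L(z)$; the abelianness of $\C_L(w), \C_L(u), \ldots$ then yields $[y,z]=0$ at some stage, contradicting $w \ne 0$. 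Once $L$ is anisotropic, $\C_L(x) = L^0(x)$ for each $x \ne 0$; combining Theorem~\ref{th-as} with the classification of forms in \S\ref{ss-forms} then shows that abelianness of centralizers uniformizes their dimension to $\rk L$, giving regularity.

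For $(ii) \Rightarrow (i)$, I would argue by contradiction. If some $x \ne 0$ has $\dim L^0(x) > \rk L$, then $L^0(x)$ is proper in $L$ (else $\rk L = \dim L$ and $L$ is nilpotent, contrary to non-solvability), hence regular by $(ii)$. Since $x \in L^0(x)$ is $\ad_{L^0(x)}$-nilpotent and nonzero, Lemma~\ref{lemma-nn} forces $L^0(x)$ to be nilpotent. Picking a regular element $y \in L$ and analyzing the Cartan decomposition $L = L^0(y) \dotplus L^1(y)$, I would identify a canonically associated proper subalgebra such as the normalizer $\N_L(L^0(x))$ whose Fitting decomposition with respect to a suitably chosen element exhibits non-regularity, contradicting $(ii)$.

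The main obstacle is twofold: the anisotropy step in $(iii) \Rightarrow (i)$, where the abelian centralizers must rule out ad-nilpotent elements via a delicate iterative argument, and the closing of $(ii) \Rightarrow (i)$. Both depend essentially on Premet's Theorem~A and the Jordan--Chevalley--Seligman decomposition in the universal $p$-envelope (as in Lemma~\ref{lemma-anis}), which were not needed in the characteristic-zero treatment of \cite[Theorem~3]{mna}; this is precisely where the positive-characteristic version of the theorem requires substantially more effort than its characteristic-zero predecessor.
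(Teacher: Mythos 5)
Your restructuring of the equivalence (proving $(i)\Rightarrow(iii)$ and then, separately, $(ii)\Rightarrow(i)$, instead of the paper's chain $(i)\Rightarrow(ii)\Rightarrow(iii)\Rightarrow(i)$) is logically legitimate, and your $(i)\Rightarrow(iii)$ is correct -- indeed it is \emph{easier} than what the theorem actually demands, because you get to assume $L$ itself is regular, hence anisotropic by Lemma~\ref{lemma-nn}, and then nilpotent subalgebras are killed by semisimplicity of restrictions. But this is exactly where the gap opens: the entire difficulty of the theorem is concentrated in the implication you have relocated, namely $(ii)\Rightarrow(i)$ (equivalently, the paper's $(ii)\Rightarrow(iii)$), where one knows only that \emph{proper} subalgebras are regular and must bootstrap up to $L$. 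Your sketch for this step -- ``identify a canonically associated proper subalgebra such as $\N_L(L^0(x))$ whose Fitting decomposition with respect to a suitably chosen element exhibits non-regularity'' -- is not an argument; no such subalgebra is exhibited and no contradiction is derived. The paper's proof of this implication is a substantial multi-case analysis: it invokes the Block theorem to reduce the semisimple case to almost simple $L$, an induction showing $\Ker(d^n)$ is abelian for an outer derivation $d$ together with the dimension bound $\dim S\le 2\rk S$ to exclude one degenerate case, Gein's theorem on algebras all of whose proper subalgebras are nilpotent, and Premet's result on absolute zero divisors (with a further Jacobi-identity induction on $[\dots[c,L],\dots,L]$ and a Cartan-subalgebra argument) to handle simple $L$ that might not be classical. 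None of this machinery appears in your proposal, and you acknowledge the step is open; as it stands the proof is incomplete at its central point.

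Two smaller issues in your $(iii)\Rightarrow(i)$. First, the iterative construction $w=[z,y],\,u=[y,w],\dots$ intended to rule out ad-nilpotent elements is not shown to terminate in the asserted contradiction $[y,z]=0$; the paper's route is cleaner and actually closes: prove by induction (via the Leibniz formula~(\ref{eq-dd})) that every $\Ker\big((\ad x)^n\big)$ is a subalgebra with central element $x$, hence abelian by $(iii)$, so $\ad x$ nilpotent would force $L$ abelian. Second, you cite Theorem~\ref{th-as} to identify the structure of $L$, but that theorem presupposes regularity of $L$, which is the conclusion being sought; the correct tool is Theorem~A applied to the anisotropic centerless $L$. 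Finally, the assertion that abelianness of centralizers ``uniformizes their dimension to $\rk L$'' is precisely the nontrivial content of the last part of the paper's proof: one must show separately that $\C_L(x)$ for $x$ in the simple constituent $S$ and $\C_L(d)$ for an outer derivation $d$ both have dimension $\rk L+\dim D$, the latter requiring the normalizer argument identifying $\Ker(d)$ as a Cartan subalgebra of $S$ and the constancy of Cartan dimensions in forms of classical algebras.
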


\begin{proof}
(i) $\Rightarrow$ (ii) follows from Lemma \ref{lemma-2}.

(ii) $\Rightarrow$ (iii): 
By Lemma \ref{lemma-3}, Lemma \ref{lemma-almost}, and
Lemma \ref{lemma-nn}, any proper subalgebra of $L$ is either almost simple 
anisotropic, or nilpotent. Our goal is to prove that any nilpotent subalgebra is
abelian. We consider multiple possible cases.

\emph{Case 1}. $L$ is not semisimple. Let $I$ be a nontrivial abelian ideal of 
$L$.

\emph{Case 1.1}. $L$ contains a proper almost simple subalgebra $S$. Since any 
element of $I$ is nilpotent, we have $S \cap I = 0$. Further, $S \dotplus I$ is
a subalgebra of $L$, which is not regular, and hence coincides with the whole 
$L$. We have $[L,L] = [S,S] \dotplus [S,I]$.

\emph{Case 1.1.1}. $[L,L] = L$. Then $[S,S] = S$, i.e., $S$ is perfect, and 
$[S,I] = I$. Consider the action of $S$ on $I$ as a representation $\rho$ of 
$S$. For any $x \in S$, the subalgebra $Kx \dotplus I$ is a proper subalgebra of
$L$ which is not almost simple anisotropic, hence it is nilpotent, and $x$ acts
on $I$ nilpotently. By the Engel theorem, $S / \Ker \rho$ is nilpotent. On the other 
hand, $S / \Ker \rho$ is a homomorphic image of a perfect Lie algebra, hence is
also perfect. Consequently, $S / \Ker \rho = 0$, i.e., $[S,I] = 0$, and $I$ lies
in the center of $L$ and hence vanishes, a contradiction.

\emph{Case 1.1.2}. $[L,L]$ is a proper subalgebra of $L$. It is not nilpotent, 
hence anisotropic, and thus $[S,I] = 0$, i.e., $I$ lies in the center of $L$, a
contradiction.

\emph{Case 1.2}. All proper subalgebras of $L$ are nilpotent. Then, according to
\cite[Theorem 8.8, part 3]{gein-book}, all proper subalgebras of $L$ are 
abelian.

\emph{Case 2}. $L$ is semisimple. Repeating the reasoning in the proof of 
Lemma \ref{lemma-almost} based on the Block theorem, we get that $L$ is almost
simple: $S \subseteq L \subseteq \Der(S)$ for a simple Lie algebra $S$.

\emph{Case 2.1}. $S$ is a proper subalgebra of $L$. Then $S$ is regular, and by
Theorem \ref{th-as} is an anisotropic form of a classical Lie algebra. Take an
arbitrary nonzero element $d \in \Der(S)$ such that $d \in L$ and $d \notin S$.
Now we use a reasoning from the proof of \cite[Theorem 4.1]{farn-anis}, suitably
modified for our purpose. 

Let us prove by induction that $\Ker(d^n)$ is an abelian subalgebra of $S$
for any $n$. Indeed, if $n=1$, we have that $\Ker(d)$ is a proper subalgebra of
$S$, and hence is either abelian, or almost simple. On the other hand, 
$\Ker(d) \dotplus Kd$ is a subalgebra of $L$. Since $d$ is a central element in
this subalgebra, it is a proper subalgebra, and hence is regular. Since it 
contains a nonzero central element, it is nilpotent, and hence $\Ker(d)$ cannot
be almost simple. This shows that $\Ker(d)$ is abelian.

Now suppose that $\Ker(d^n)$ is an abelian subalgebra of $S$. For any 
$x,y \in S$, we have
\begin{equation}\label{eq-dd}
d^{n+1}([x,y]) = \sum_{i=0}^{n+1} [d^{n+1-i}(x),d^i(y)] .
\end{equation}

If $x,y \in \Ker(d^{n+1})$, then all terms at the right-hand side of this 
equality vanish, what shows that $\Ker(d^{n+1})$ is a subalgebra of $S$. On the
other hand, similarly with the case $n=1$, $\Ker(d^{n+1}) \dotplus Kd$ is a 
subalgebra of $L$. 

\emph{Case 2.1.1}. $\Ker(d^{n+1}) \dotplus Kd$ is a proper subalgebra of $L$. 
Then it is regular, and, since $d$ is a nilpotent element in it, it is 
nilpotent. Thus $\Ker(d^{n+1})$ is a nilpotent subalgebra of $S$, and by 
Lemmas~\ref{lemma-anis-sub} and \ref{lemma-solv}, $\Ker(d^{n+1})$ is abelian.

\emph{Case 2.1.2}. $L = \Ker(d^{n+1}) \dotplus Kd$. Then $S = \Ker(d^{n+1})$, 
and both $\Ker(d)$ and $\im(d)$ lie in $\Ker(d^n)$. Consequently, 
\begin{equation}\label{eq-d}
\dim S = \dim \Ker(d) + \dim \im(d) \le 2 \dim \Ker(d^n) .
\end{equation}

Let $H$ be a maximal abelian subalgebra of $S$ containing $\Ker(d^n)$. Since $H$
is an ideal in its normalizer $\N_S(H)$, the latter is a proper abelian 
subalgebra of $S$, and hence $N_S(H) = H$, i.e. $H$ is a Cartan subalgebra. 
Since $S$ is a form of a classical simple Lie algebra, all its Cartan 
subalgebras are of the same dimension, equal to $\rk S$. Now from (\ref{eq-d}) 
we have $\dim S \le 2\rk S$. Passing to the algebraic closure of the ground 
field, and owing to the fact that the rank is preserved under field extensions,
we have the same inequality $\dim \overline S \le 2\rk \overline S$ for a 
semisimple classical Lie algebra $\overline S = S \otimes_K \overline K$ over an
algebraically closed field $\overline K$, which is, obviously, impossible.

Since $\Ker(d^n)$ is abelian for any $n$, $d$ is not nilpotent. Since any 
nonzero element of $S$ is not nilpotent too, we get that $L$ is nilpotent-free,
and by Lemma \ref{lemma-anis}, $L$ is anisotropic, and then by 
Lemma~\ref{lemma-solv}, any of its nilpotent subalgebras are abelian. 

\emph{Case 2.2}. $L = S$, i.e., $L$ is simple. Any proper nilpotent subalgebra 
of $L$ is contained in a maximal nilpotent subalgebra $H$. The latter is either 
maximal among all (not just nilpotent) subalgebras, or contained in an 
anisotropic almost simple subalgebra, and by Lemma \ref{lemma-solv} is abelian.
If $H$ is maximal, then its normalizer $\N_L(H)$ is a subalgebra of $L$ 
containing $H$, then either $\N_L(H) = L$ in which case $H$ is an ideal of $L$,
a contradiction, or $\N_L(H) = H$, in which case $H$ is a Cartan subalgebra of 
$L$. To summarize: any nilpotent subalgebra of $L$ is either abelian, or 
contained in a Cartan subalgebra of $L$.

By \cite[Corollary]{premet-bssr}, any simple Lie algebra is either a form of a 
classical Lie algebra, or possesses absolute zero divisors, i.e., nonzero 
elements $c \in L$ such that $(\ad c)^2 = 0$.

\emph{Case 2.2.1}. $L$ is a form of a classical simple Lie algebra. Then any 
Cartan subalgebra of $L$, including $H$, is abelian.

\emph{Case 2.2.2}. $L$ possesses an absolute zero divisor $c \in L$. The 
centralizer $\C_L(c)$ is a subalgebra of $L$ with a central element $c$, hence 
it cannot be almost simple, and is nilpotent. 

\emph{Case 2.2.2.1}. $\C_L(c)$ is abelian. Let us prove by induction that
\begin{equation}\label{eq-n}
[\dots[c,\underbrace{L],\dots,L}_{n \text{ times }}] \subseteq \C_L(c)
\end{equation}
for any $n$. For $n=1$ this is true: $[c,L] \subseteq \C_L(c)$. Suppose the 
inclusion (\ref{eq-n}) holds for some $n>1$. By the Jacobi identity,
$$
[[[\dots[c,\underbrace{L],\dots,L}_{n \text{ times }}],L],c] \subseteq
[[[\dots[c,\underbrace{L],\dots,L}_{n \text{ times }}],c],L] \>+\>
[[\dots[c,\underbrace{L],\dots,L}_{n \text{ times }}],[c,L]] .
$$
By the induction assumption, the first summand at the right-hand side of this
inclusion vanishes, and the second summand is the product of two elements from 
$\C_L(c)$, and thus vanishes too. Thus the left-hand side vanishes, what is
equivalent to
$$
[\dots[c,\underbrace{L],\dots,L}_{n+1 \text{ times }}] \subseteq \C_L(c) .
$$

The sum of terms at the left-hand side of the inclusion (\ref{eq-n}) for all $n$
is the ideal of $L$ generated by $c$, and thus coincides with $L$. Consequently,
$L \subseteq \C_L(c)$, a contradiction.

\emph{Case 2.2.2.2}. $\C_L(c)$ is contained in a Cartan subalgebra $H$. Consider
the Fitting decomposition of $L$ with respect to $H$: $L = H \dotplus L_1$. 
Since $c \in H$, we have 
$$
[H,c] \dotplus [L_1,c] = [L,c] \subseteq \C_L(c) \subseteq H ,
$$
what implies $[L_1,c] = 0$ and $[L,c] = [H,c]$. The latter equality means that 
for any $x \in L$ there is $h \in H$ such that $[x,c] = [h,c]$, therefore 
$x - h \in \C_L(c)$, and $x \in H + \C_L(c) = H$, a contradiction.

\medskip

(iii) $\Rightarrow$ (i):

\emph{Case 1}. $L$ is not semisimple. Let $I$ be a nontrivial abelian ideal of
$L$. Take $x \in L$, $x \notin I$, and consider a subalgebra $I \dotplus Kx$. 
Obviously, this is a proper subalgebra of $L$ which is not almost simple, and
hence is abelian. This shows that $I$ contained in the center of $L$, and thus
$I=0$, a contradiction.

\emph{Case 2.} $L$ is semisimple. The reasonings based on the Block theorem, the
same as in the proof of Lemma \ref{lemma-almost}, show that $L$ is almost 
simple. Fix a nonzero $x \in L$, and let us prove by induction that 
$\Ker\big((\ad x)^n\big)$ is an abelian subalgebra of $L$ for any $n$. The proof
is very similar to the proof of the analogous statement in Case 2.1 of the 
implication (ii) $\Rightarrow$ (iii) above.

For $n=1$, $\Ker(\ad x)$ is a subalgebra of $L$ with a central element $x$, 
hence it cannot be almost simple, and thus is abelian. Now suppose 
$\Ker\big((\ad x)^n\big)$ is an abelian subalgebra of $L$ for some $n$. Using
the generalized Leibniz formula (\ref{eq-dd}) for the case $d = \ad x$, we get 
that $\Ker\big((\ad x)^{n+1}\big)$ is a subalgebra of $L$. Again, this 
subalgebra has a central element $x$, hence cannot be almost simple, and thus is
abelian. 

Therefore, $\ad x$ cannot be nilpotent, thus $L$ is nilpotent-free, and by 
Lemma \ref{lemma-anis} is anisotropic. By Theorem A, the simple constituent $S$
of $L$ is a form of a classical simple Lie algebra (and, in particular, all 
Cartan subalgebras of $S$ are of the same dimension $\rk S$). Now the same 
reasoning as in the proof of the implication (iii) $\Rightarrow$ (i) in 
\cite[Theorem 3]{mna} shows that $S$ is regular. If $L = S$ we are done, so 
assume $S \subsetneqq L \subseteq \Der(S)$. We can write
\begin{equation}\label{eq-sd}
L = S \dotplus D ,
\end{equation}
where $D$ is a subspace of $\Der(S)$, consisting of outer derivations of $S$. 
According to \S \ref{s-not}.\ref{ss-as}, we have 
$[\Der(S),\Der(S)] \subseteq S$, thus $D$ is an abelian subalgebra of $L$.

For any $x \in S$, we have
$$
\C_L(x) = \set{y + d}{y \in S, d \in D, [y,x] = d(x)} .
$$
For each $d \in D$, the set $\set{y \in L}{[y,x] = d(x)}$ is an affine space 
over the vector subspace $\C_S(x)$, and hence 
$$
\dim \C_L(x) = \dim \C_S(x) + \dim D = \rk L + \dim D .
$$

Any element $d \in L$, $d \notin S$ may be ``fixed'' by an inner derivation of 
$S$ to represent an outer derivation, so we may assume $d \in D$ for an 
appropriate decomposition (\ref{eq-sd}). Then, since $[D,D] = 0$, we have 
$\C_L(d) = \Ker(d) \dotplus D$. Further, $\Ker(d)$ is a subalgebra of $S$, and 
$\Ker(d) \dotplus Kd$ is a subalgebra of $L$ with a central element $d$, thus 
$\Ker(d) \dotplus Kd$ cannot be almost simple, and hence is abelian. Now, 
$\N_L(\Ker(d))$ is a subalgebra of $S$ having the abelian ideal $\Ker(d)$. Note
that according to \S \ref{s-not}.\ref{ss-as}, $\Ker(d) \ne 0$, and hence 
$\N_L(\Ker(d))$ is abelian.

We also have:
\begin{multline*}
[d(\N_L(\Ker(d))), \Ker(d)] \subseteq 
 d([\N_L(\Ker(d)), \Ker(d)]) + [\N_L(\Ker(d)), d(\Ker(d))]
\\ \subseteq d(\Ker(d)) = 0 \subset \Ker(d) ,
\end{multline*}
what shows that $\N_L(\Ker(d))$ is invariant under $d$. Thus 
$\N_L(\Ker(d)) \dotplus Kd$ is a subalgebra of $L$ with an abelian subalgebra 
$\N_L(\Ker(d))$ of codimension $1$. Clearly, $\N_L(\Ker(d)) \dotplus Kd$ cannot
be almost simple, and hence is abelian. This means that 
$\N_L(\Ker(d)) \subseteq \Ker(d)$, and hence $\N_L(\Ker(d)) = \Ker(d)$. Thus
$\Ker(d)$ is a Cartan subalgebra of $S$, and hence $\dim \Ker(d) = \rk L$, and
$\dim \C_L(d) = \rk L + \dim D$.

We see that centralizers of all nonzero elements in $L$ have the same dimension
$\rk L + \dim D$. Since $L$ is anisotropic, for any $x \in L$ the space $L^0(x)$
coincides with the centralizer $\C_L(x)$, and therefore any nonzero element of
$L$ is regular.
\end{proof}

\section{Real regular Lie algebras}\label{s-real}

If the ground field is algebraically closed, the only regular Lie algebras are
nilpotent ones. This follows from Lemma \ref{lemma-3}, Theorem \ref{th-subalg}, 
and an obvious fact that over an algebraically closed field any non-nilpotent 
Lie algebra contains the two-dimensional nonabelian subalgebra (consider an
eigenvector of a non-nilpotent adjoint map).

A description of regular Lie algebras over $\mathbb R$, the field of real 
numbers, is almost as easy and readily follows from the developed structure 
theory. Recall that $\su_2(\mathbb R)$ denotes the simple $3$-dimensional 
compact Lie algebra, a real form of $\Sl_2(\mathbb C)$.

\begin{theorem}\label{th-r} (The ground field is $\mathbb R$).
A Lie algebra is regular if an only if it is either nilpotent, or isomorphic to
$\su_2(\mathbb R)$.
\end{theorem}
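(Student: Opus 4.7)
The plan is to establish the two implications separately, leaning on the structural results already proved and on a bit of real Lie theory for the classification of the non-nilpotent case.

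For the ``if'' direction, nilpotent Lie algebras are regular by Lemma~\ref{lemma-1}. For $\su_2(\mathbb{R})$, I would fix the standard basis $e_1,e_2,e_3$ with $[e_i,e_j]=\epsilon_{ijk}\,e_k$ and note that in this basis $\ad x$ is a real skew-symmetric $3\times 3$ matrix for every $x=a_1e_1+a_2e_2+a_3e_3$. Its characteristic polynomial is therefore
$$
\chi_{\ad x}(t)\;=\;t\bigl(t^2+a_1^2+a_2^2+a_3^2\bigr),
$$
so $\rk\su_2(\mathbb{R})=1$ and every nonzero $x$ is regular.

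For the ``only if'' direction, let $L$ be a non-nilpotent regular real Lie algebra. I would first reduce to the case that $L$ is simple, central, and anisotropic. By Lemma~\ref{lemma-3} $L$ is semisimple, and Lemma~\ref{lemma-almost} together with the classical fact that every derivation of a semisimple Lie algebra in characteristic zero is inner yields that $L$ is simple. If $\Omega(L)\ne\mathbb{R}$ then $\Omega(L)=\mathbb{C}$, and the restriction-of-scalars algebra $L_{\mathbb{C}}$ is a regular (by \S\ref{s-not}.\ref{ss-cent}) complex simple Lie algebra, contradicting the observation at the start of \S\ref{s-real} that over an algebraically closed field only nilpotent Lie algebras can be regular. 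Hence $L$ is central simple and $L^{\mathbb{C}}:=L\otimes_{\mathbb{R}}\mathbb{C}$ is simple. Lemma~\ref{lemma-nn} gives anisotropy of $L$; since a non-compact real semisimple Lie algebra carries a nontrivial nilpotent factor (the $\mathfrak{n}$-part of its Iwasawa decomposition), $L$ must be a compact real form of $L^{\mathbb{C}}$.

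It remains to force $\rk L=1$. I would pick a Cartan subalgebra $H\subseteq L$ and use the root decomposition $L^{\mathbb{C}}=H^{\mathbb{C}}\oplus\bigoplus_{\alpha}L^{\mathbb{C}}_{\alpha}$. Compactness of $L$ implies that every root restricts to a nonzero $\mathbb{R}$-linear map $\alpha|_H\colon H\to i\mathbb{R}$; if $\dim H\ge 2$ its real kernel has dimension $\ge\dim H-1\ge 1$, and any nonzero $h$ in this kernel satisfies $[h,L^{\mathbb{C}}_{\alpha}]=[h,L^{\mathbb{C}}_{-\alpha}]=0$, giving $\dim\C_L(h)=\dim_{\mathbb{C}}\C_{L^{\mathbb{C}}}(h)\ge\rk L+2$, which contradicts regularity. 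Therefore $\rk L=1$, so $L^{\mathbb{C}}\cong\Sl_2(\mathbb{C})$, whose only compact real form is $\su_2(\mathbb{R})$. The main obstacle I anticipate is the identification of a real anisotropic simple Lie algebra with a compact real form, which requires invoking real structure theory beyond what is developed in this paper; once granted, the rest is a direct consequence of Lemmas~\ref{lemma-3}, \ref{lemma-almost}, \ref{lemma-nn} and the root-space contradiction above.
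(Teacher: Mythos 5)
Your proof is correct and follows essentially the same route as the paper: reduce via Lemmas~\ref{lemma-3}, \ref{lemma-almost} and \ref{lemma-nn} to a simple anisotropic real Lie algebra, identify it with a compact real form by real structure theory, and then exclude rank $\ge 2$ by locating a nonzero element in the kernel of a root inside a Cartan subalgebra. The only (inessential) difference is packaging: you show that element is non-regular by counting its centralizer directly, whereas the paper exhibits the corresponding non-regular subalgebra $\mathfrak h_c \dotplus \mathbb{R}u_\alpha \dotplus \mathbb{R}v_\alpha$ (a split central extension of $\su_2(\mathbb R)$) and invokes Lemma~\ref{lemma-2} --- the obstructing element is the same in both cases.
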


\begin{proof}
The ``if'' part follows from Lemma \ref{lemma-1} and the fact that 
$\su_2(\mathbb R)$ is regular. The latter can be either verified directly, or 
follows from the fact that $\su_2(\mathbb R)$ is isomorphic to 
$Q^{(-)}/\mathbb R 1$, where $Q$ is the quaternionic real division algebra (of
degree $2$), and Corollary \ref{cor-d} below.

The ``only if'' part: By Lemma \ref{lemma-3}, Lemma \ref{lemma-almost}, and
Lemma \ref{lemma-nn}, a regular Lie algebra over a field of characteristic zero
is either nilpotent, or simple anisotropic. Now the structure theory of real
simple Lie algebras readily implies that a real simple anisotropic Lie algebra 
is compact (see, for example, \cite[Theorem 2]{singer} or 
\cite[Theorem 2]{sugiura}). Further, the structure of the compact form of a 
complex simple Lie algebra (as expounded, for example, in 
\cite[Chapter IX, \S 3, Proof of Proposition 2]{bourbaki} or 
\cite[Chapter IV, \S 7, Proof of Theorem 10]{jacobson}) implies that 
algebras of rank $>1$ are not regular. Indeed, if 
$$
\mathfrak g = \mathfrak h \dotplus \bigoplus_{\alpha \in R} \mathbb C e_\alpha
$$
is a Cartan decomposition of a complex simple Lie algebra $\mathfrak g$ with 
Cartan subalgebra $\mathfrak h$ and the root system $R$, then the compact form 
of $\mathfrak g$ can be constructed as 
$$
\mathfrak g_c = \mathfrak h_c \dotplus \bigoplus_{\alpha \in R_+} 
(\mathbb R u_\alpha \dotplus \mathbb R v_\alpha) ,
$$
where 
$$
\mathfrak h_c = 
i \set{h \in \mathfrak h}{\alpha(h) \in \mathbb R \text{ for any } \alpha \in R}
$$
is the Cartan subalgebra of $\mathfrak g_c$, 
$u_\alpha = e_\alpha + e_{-\alpha}$, $v_\alpha = i(e_\alpha - e_{-\alpha})$, and
$R_+$ is the set of positive roots. 

Then if $\rk \mathfrak g_c = \rk \mathfrak g > 1$, the algebra 
$\mathfrak g_c$ has a lot of non-regular subalgebras: for example, 
$$
\mathfrak h_c \dotplus \mathbb R u_\alpha \dotplus \mathbb R v_\alpha
$$
for any fixed $\alpha \in R_+$, which is isomorphic to a split central extension
of $\su_2(\mathbb R)$, or $\bigoplus_{\alpha \in R_+} \mathbb R u_\alpha$. Thus
we are left with the compact Lie algebra of rank one, which is isomorphic to 
$\su_2(\mathbb R)$.
\end{proof}

\section{
Minimal non-abelian, minimal non-regular, and minimal non-commutative algebras
}\label{s-min}

In this section we discuss some ramifications and consequences of 
Theorem~\ref{th-subalg}, and connection with the class of minimal nonabelian Lie
algebras.

An alternative, more concrete, way to handle the almost simple case in the
proof of implication (iii) $\Rightarrow$ (i) of Theorem \ref{th-subalg}, would 
be the following. Recall that the task there is boiled down to this: if $S$ is a
regular simple Lie algebra, prove that its derivation algebra $\Der(S)$ is also
regular. By \cite[Chapter X, \S 1, Theorem 5]{jacobson}, for any $d \in \Der(S)$
and $\omega \in \Omega(S)$, there is a derivation $\delta \in \Der_K(\Omega(S))$ such that $[d,\omega] = \delta(\omega)$. But 
since the ground field $K$ is perfect, the finite field extension 
$K \subset \Omega(S)$ is separable, and $\Der_K(\Omega(S)) = 0$. Therefore, 
$\Der(S)$ carries a structure of a vector space over $\Omega(S)$, and 
$\Der_{\Omega(S)}(S) = \Der(S)_{\Omega(S)}$. Since the fact whether the algebra is regular or not does not change under restriction over a 
larger field, to prove regularity of $\Der(S)$, we can pass to the restriction 
over $\Omega(S)$, and assume that $S$ is central simple. 

According to \S \ref{s-not}.\ref{ss-as}, we have $\Der(S) \not\simeq S$ only
if characteristic of the ground field is $p>0$, and $S$ is a form of the Lie 
algebra $\psl_{kp}(\overline K)$ for some $k$. According to 
\S \ref{s-not}.\ref{ss-forms}, $S$ is isomorphic either to an algebra 
$[A^{(-)},A^{(-)}]/K1$, where $A$ is a central simple associative algebra, or to
an algebra $[\sym^-(A,J),\sym^-(A,J)]/K1$, where $A$ is a simple associative 
algebra whose center is a quadratic extension of $K$, and $J$ is involution on 
$A$ of the second kind; in both cases the degree of $A$ over its center is equal
to $kp$.

In these two cases, we have $\Der(S) \simeq A^{(-)}/K1$, and 
$\Der(S) \simeq \sym^-(A,J)/K1$, respectively. In the first case, it is 
straightforward to see that if $[A^{(-)},A^{(-)}]/K1$ is anisotropic, then $A$ 
is a division algebra. Then by a reasoning very similar to the proof of 
\cite[Theorem 6]{mna} (reproduced as Theorem \ref{min-noncomm} below; the only 
difference is that we should deal with the algebra $[A^{(-)},A^{(-)}]/K1$ 
instead of $A^{(-)}/K1$), $A$ is minimal noncommutative. Then by the same 
theorem $A^{(-)}/K1$ is regular, and we are done. The second case, involving 
$\sym^-(A,J)$ with involution of the second kind, is a bit more complicated, but
could be treated similarly.

\begin{corollary}[to Theorem \ref{th-subalg}] 
(The ground field $K$ is perfect of characteristic $\ne 2,3$).
A simple minimal non-abelian Lie algebra is regular.
\end{corollary}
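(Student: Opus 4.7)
The proof will be essentially an immediate application of Theorem~\ref{th-subalg}, specifically the implication (iii) $\Rightarrow$ (i). First I would verify that the hypotheses of Theorem~\ref{th-subalg} are met: a simple Lie algebra $L$ has trivial center (since $\Z(L)$ is an ideal, and it cannot equal $L$ as $L$ is assumed non-abelian), and $L$ being simple and non-abelian forces $[L,L]=L$, so in particular $L$ is non-solvable.

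Next I would observe that the condition ``minimal non-abelian'' literally states that every proper subalgebra of $L$ is abelian. This is stronger than condition (iii) of Theorem~\ref{th-subalg}, which only requires each proper subalgebra to be either almost simple or abelian. So condition (iii) is trivially satisfied.

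Applying (iii) $\Rightarrow$ (i) of Theorem~\ref{th-subalg} then yields that $L$ is regular, which is exactly what we need. There is no real obstacle here, since all of the heavy structural analysis has already been absorbed into Theorem~\ref{th-subalg}; the only thing to check carefully is that ``simple'' gives us the standing hypotheses (non-solvable, trivial center) of that theorem, which is straightforward.
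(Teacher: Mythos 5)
Your proof is correct and is exactly the argument the paper intends (the corollary is stated without proof, as an immediate consequence of the implication (iii) $\Rightarrow$ (i) of Theorem~\ref{th-subalg}). Your verification that simplicity supplies the standing hypotheses (trivial center, non-solvability) and that minimal non-abelian trivially implies condition (iii) is precisely the right, and complete, chain of reasoning.
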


Over some fields the converse is true: trivially, over algebraically closed 
fields, where both classes of simple algebras are empty; or over $\mathbb R$, 
where both class of algebras consists of the single $3$-dimensional nonsplit 
simple algebra, see Theorem \ref{th-r}. Generally, the converse is not true: 
take, for example, a division algebra $D$ such that the Lie algebra 
$[D^{(-)},D^{(-)}]$ is minimal nonabelian over the center $\Z(D)$ (for example,
take $D$ of prime degree over $\Z(D)$, see \cite[Theorem 5.2]{gein} or 
\cite[Theorem 9.2]{gein-book}). Then $[D^{(-)},D^{(-)}]$ is regular over 
$\Z(D)$, and hence is regular over $K$. Assume now that there are ``enough'' 
intermediate fields $K \subset F \subset \Z(D)$. Then we can choose $F$ such 
that the centralizer $\C_D(F)$ is noncommutative, and hence the $K$-algebra $[D^{(-)},D^{(-)}]$ 
contains a proper nonabelian Lie subalgebra $[\C_D(F),\C_D(F)]$.

Of course, the Lie algebra $[D^{(-)},D^{(-)}]$ in this example is not central. 
We believe that there exist \emph{central} simple regular Lie algebras which are
not minimal nonabelian, but fail to provide an example. Probably, it can be 
found by refining some arguments in \cite{gein}. Note that according to 
\cite[Theorem 1]{muller}, over a local field of characteristic zero, no such 
central simple Lie algebras exist.

Now let us discuss the restrictions on $L$ in the condition of 
Theorem~\ref{th-subalg}. 
Both restrictions -- non-solvability and triviality of
the center -- are essential. Examples of non-regular Lie algebras which are 
either solvable, or with a nontrivial center, all whose proper subalgebras are 
regular, are provided by the following description of minimal non-regular 
algebras.

\begin{theorem}\label{th-minnonreg}
(The ground field $K$ is perfect of characteristic $p\ne 2,3$).
A Lie algebra $L$ is minimal non-regular if and only if one of the following 
holds:
\begin{enumerate}[\upshape(i)]
\item\label{i-solv} 
$L$ is solvable minimal non-nilpotent;

\item\label{i-ext}
$L$ is a split one-dimensional central extension of a minimal non-abelian simple
Lie algebra;

\item\label{i-mna}
$p>0$, $L$ is not regular, is minimal non-abelian, and is a nonsplit central 
extension of a simple minimal non-abelian Lie algebra which over its centroid is
isomorphic to a form of an algebra of the kind $\psl_{kp}(\overline K)$.
\end{enumerate}
\end{theorem}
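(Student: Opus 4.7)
The plan is to establish both directions of the iff separately. For the ``if'' direction, each of the three cases is a direct verification. In case (i), proper subalgebras are nilpotent, hence regular by Lemma \ref{lemma-1}, while $L$ itself is solvable but non-nilpotent, hence non-regular by Lemma \ref{lemma-3}. In case (ii), writing $L = S \dotplus Kz$ with $S$ simple minimal non-abelian and $z$ central, I will check by case analysis on $T \cap Kz$ that every proper subalgebra $T$ of $L$ is either abelian or isomorphic to $S$: if $Kz \subseteq T$ then $T = (T \cap S) \dotplus Kz$ with $T \cap S$ a proper subalgebra of $S$ and so abelian; if $T \cap Kz = 0$ then the projection $\pi: L \to L/Kz \simeq S$ embeds $T$ into $S$, giving either $T \simeq S$ or $T$ abelian. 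In both outcomes $T$ is regular (using that $S$ is regular by the Corollary to Theorem \ref{th-subalg}), and $L$ fails to be regular by Lemma \ref{lemma-3}. For case (iii), the stipulated minimal non-abelianness of $L$ makes proper subalgebras abelian and hence regular, while non-regularity is hypothesized.

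For the ``only if'' direction, assume $L$ is minimal non-regular. If $L$ is solvable, proper subalgebras are regular solvable and hence nilpotent by Lemma \ref{lemma-3}; $L$ is non-nilpotent (else regular by Lemma \ref{lemma-1}); so case (i) holds. Assume now $L$ is non-solvable. Theorem \ref{th-subalg} prevents $Z(L) = 0$, for otherwise every proper subalgebra being regular would force $L$ regular. Pick $z \in Z(L) \setminus \{0\}$. Any proper subalgebra of $L$ containing $z$ is regular with a nonzero central element and hence nilpotent by Lemma \ref{lemma-3}. Passing to $\overline L = L/Kz$, every proper subalgebra of $\overline L$ is nilpotent, and $\overline L$ is non-solvable (since $L$ is); hence $\overline L$ is non-solvable minimal non-nilpotent. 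By the classification of such algebras (cited in the paper), $\overline L$ is simple; combining this with Lemmas \ref{lemma-nn}, \ref{lemma-anis-sub}, and \ref{lemma-solv}, I upgrade this to: $\overline L$ is simple minimal non-abelian and anisotropic.

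I then split on whether the central extension $0 \to Kz \to L \to \overline L \to 0$ splits. If it splits, a complement to $Kz$ is a proper subalgebra of $L$ isomorphic to the simple minimal non-abelian $\overline L$, yielding case (ii). In the non-split case, I analyze a proper subalgebra $T \subsetneq L$: if $T \cap Kz = 0$ then non-splitness forces $\pi(T) \subsetneq \overline L$ (else $T$ would be a complement), so $\pi(T)$ and hence $T$ is abelian by minimality of $\overline L$; if $Kz \subseteq T$ then $T/Kz$ is abelian in $\overline L$, so $[T,T] \subseteq Kz$. The critical step is upgrading this to $[T,T] = 0$, establishing that $L$ itself is minimal non-abelian. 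My plan is to show that the preimage of any Cartan subalgebra $\overline H$ of $\overline L$ is a nilpotent self-normalizing (hence Cartan) subalgebra of $L$, and invoke the structure theory of Cartan subalgebras of forms of classical simple Lie algebras in positive characteristic to conclude abelianness. Finally, since $\overline L$ is a form of a classical simple Lie algebra (by Theorem \ref{th-as}) admitting a non-split central extension by $K$, the cohomological classification of central extensions forces $\overline L$, over its centroid, to be a form of $\psl_{kp}(\overline K)$, delivering case (iii). The main obstacle in this plan is precisely this non-split subcase: rigorously ruling out Heisenberg-type proper subalgebras and identifying the isomorphism type of $\overline L$ both require nontrivial structural input beyond the tools developed so far.
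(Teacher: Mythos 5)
Your ``if'' direction and the solvable case are fine, and your overall skeleton for the ``only if'' direction (non-solvability forces $Z(L)\ne 0$ via Theorem \ref{th-subalg}; proper subalgebras containing a central $z$ are nilpotent by Lemma \ref{lemma-3}; split versus non-split dichotomy) is sound. But there is a genuine gap exactly where you flag one. In the non-split case you are left with proper subalgebras $T\supseteq Kz$ satisfying only $[T,T]\subseteq Kz$, and your proposed remedy --- passing through preimages of Cartan subalgebras of $\overline L$ --- does not work: such a $T$ need not lie in the preimage of a Cartan subalgebra, and even when it does, the preimage of an abelian Cartan subalgebra of $\overline L$ is a priori only nilpotent of class $\le 2$, which is precisely the Heisenberg-type configuration you are trying to exclude. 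A second, related imprecision: Gein's theorem for a non-solvable minimal non-nilpotent algebra $M$ gives that $M$ is minimal non-abelian and that $M/Z(M)$ is simple --- not that $M$ itself is simple --- so your assertion ``$\overline L$ is simple'' silently assumes $Z(\overline L)=0$, which you have not verified.

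Both problems disappear if you apply \cite[Theorem 8.8]{gein-book} one level up, to $L$ itself, which is what the paper does. In the non-split case your own observations already show that \emph{every} proper subalgebra of $L$ is nilpotent: those containing $Kz$ are regular with nonzero center, hence nilpotent by Lemma \ref{lemma-3}; those meeting $Kz$ trivially would split the extension if they surjected onto $\overline L$, so they inject into a proper, hence nilpotent, subalgebra of $\overline L$. Thus $L$ is minimal non-nilpotent, and Gein's theorem applied to $L$ directly yields that $L$ is minimal non-abelian and $L/Z(L)$ is simple --- this is exactly the ``upgrade'' you could not perform by hand, obtained from the very citation you already invoke. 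The identification of $L/Z(L)$ over its centroid as a form of $\psl_{kp}(\overline K)$ then follows, as you indicate, from $\Homol^2(L/Z(L),K)\ne 0$ together with Block's computation of second cohomology of classical algebras. In the split case, the complement $S$ is a proper, hence regular, subalgebra of $L$; if $Z(S)\ne 0$ then $S$, and with it $L=S\dotplus Kz$, would be nilpotent by Lemma \ref{lemma-3}, contradicting non-solvability --- this supplies the missing $Z(\overline L)=0$ and hence the simplicity of the complement needed for case (ii).
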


\begin{proof}
The ``if'' part is obvious, so let us prove the ``only if'' part. If $L$ is 
solvable, then by Lemma \ref{lemma-3}, all its proper subalgebras are nilpotent,
and we are in part (\ref{i-solv}). So we may assume that $L$ is not solvable.
By Theorem~\ref{th-subalg}, $Z(L) \ne 0$.

\emph{Case 1}. $L$ is minimal non-nilpotent. By \cite[Theorem 8.8]{gein-book}, 
$L$ is minimal non-abelian, and $L/Z(L)$ is simple. If $\overline A$ is a proper
subalgebra of $L/Z(L)$, then the preimage of $\overline A$ in $L$ is a proper 
subalgebra of $L$, and hence is abelian. This shows that $L/\Z(L)$ is minimal 
non-abelian. By \cite[Lemma 7]{premet-cartan}, or by 
\cite[Theorem 4.1]{farn-anis}, any simple minimal non-abelian Lie algebra is 
anisotropic, and then by Theorem A, $L/Z(L)$ is a form of a classical Lie 
algebra.

If the central extension $0 \to Z(L) \to L \to L/Z(L) \to 0$ splits, then 
$L/Z(L)$ is a proper subalgebra of $L$, a contradiction. Therefore, this central
extension does not split, and the second degree cohomology $\Homol^2(L/Z(L),K)$
does not vanish. As cohomology is preserved under field extensions, 
$\Homol^2(L/Z(L) \otimes_K \overline K, \overline K) \ne 0$. According to 
\cite[Theorem 3.1]{B1}, among classical Lie algebras, the second degree 
cohomology with trivial coefficients vanishes except for the case of the algebra
$\psl_{pk}(K)$, in which case $\Homol^2(\psl_{pk}(K),K)$ is one-dimensional.
Therefore, 
$(L \otimes_K \overline K) / Z(L \otimes_K \overline K) \simeq 
L/Z(L) \otimes_K \overline K$ 
is isomorphic to the direct sum of several copies of $\psl_{pk}(\overline K)$ 
for some $k$, and hence $L/Z(L)$, being restricted over its centroid, is 
isomorphic to a form of $\psl_{pk}(\overline K)$. We are in part (\ref{i-mna}).

\emph{Case 2}. $L$ contains a proper non-nilpotent subalgebra $S$. For any 
nonzero $z \in Z(L)$, $S + Kz$ is a subalgebra of $L$ with a central element 
$z$. If it is a proper subalgebra, then it is nilpotent, a contradiction. Hence
$L = S \dotplus Kz$, and $Z(L) = Kz$ is one-dimensional. If $A$ is a proper 
subalgebra of $S$, then $A \dotplus Kz$ is a proper subalgebra of $L$ with a
nonzero central element $z$, and hence $A \dotplus Kz$ is nilpotent. Therefore,
$S$ is minimal non-nilpotent. Since $S$ is regular, by Theorem~\ref{th-as} it is
almost simple, and since $S$ is minimal non-nilpotent, it is simple. Now by 
\cite[Lemma 7]{premet-cartan}, or by \cite[Theorem 8.8, part 3]{gein-book}, $S$
is minimal non-abelian, and we are in part (\ref{i-ext}).
\end{proof}

Further comments about the conditions on Lie algebras arising in this theorem 
are in order. Solvable minimal non-nilpotent Lie algebras, i.e. algebras in part
(\ref{i-solv}), are described in \cite[Theorem 8.5]{gein-book}, 
\cite{stitzinger}, and \cite{towers-laa}. All of them are isomorphic to a 
semidirect sum of the kind $L = N \dotplus Kx$, where $N$ is nilpotent (of index
$3$ if the ground field is perfect), and $\ad x$ acts on $N$ in a specific way.

Concerning part (\ref{i-mna}): if $L/Z(L)$ is central, then it is a form of 
$\psl_{pk}(\overline K)$ for some $k$. According to \cite[Theorem 3.1]{B1},
any nonsplit central extension of $\psl_{pk}(\overline K)$ is one-dimensional,
and is isomorphic to $\Sl_{pk}(\overline K)$. According to 
\S \ref{s-not}.\ref{ss-forms}, in this case $L$ is isomorphic either to an 
algebra $[A^{(-)},A^{(-)}]$, where $A$ is a central simple associative algebra 
$A$ of degree $kp$, or to $[\sym^-(A,J),\sym^-(A,J)]$, where $A$ is a simple 
associative algebra $A$ with involution $J$ of the second kind. It is a natural
and interesting question when algebras of this kind, and, generally, central 
simple forms of simple classical Lie algebras of all types, and close to them 
algebras, are regular. For types B--D this amounts to studying the Lie algebras 
$\sym^-(A,J)$, where $A$ is a central simple associative algebra with involution
$J$ of the first kind, and is more or less straightforward. For exceptional 
types, the situation is more difficult. We hope to treat this question 
elsewhere.

A first step in this direction is

\begin{theorem}\label{min-noncomm}
Let $D$ be a central associative division algebra. Then the Lie algebra 
$D^{(-)}/K1$ is regular if and only if $D$ is a minimal non-commutative algebra.
\end{theorem}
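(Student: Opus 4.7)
The plan is to translate regularity of $L = D^{(-)}/K1$ into centralizer dimensions in $D$, using the double centralizer theorem for central simple algebras together with the connection between Fitting $0$-components in $L$ and associative centralizers in $D$.

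For the ``if'' direction, assume $D$ is minimal non-commutative of degree $n$, so that $\dim_K D = n^2$. For any non-central $x \in D$, the associative centralizer $\C_D(x)$ is a proper subalgebra (since $x \notin K = \Z(D)$), hence commutative by minimality, hence a subfield of the division algebra $D$. The double centralizer theorem applied to the simple subalgebra $K[x]$ yields $[K[x]:K] \cdot \dim_K \C_D(x) = n^2$; combined with the fact that subfield dimensions in $D$ divide $n$, this forces $[K[x]:K] = n$ and $\C_D(x) = K[x]$. Since $\C_L(\bar x) \supseteq \C_D(x)/K1$, one has $\dim L^0(\bar x) \geq n - 1$. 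On the other hand, extending scalars to $\overline K$ makes $x$ into a regular semisimple matrix in $M_n(\overline K)$ (provided $K[x]/K$ is separable, automatic when $K$ is perfect), so the Fitting $0$-component of $\ad x$ on $D$ has $K$-dimension exactly $n$, giving $\dim L^0(\bar x) \leq n - 1$. Hence $\dim L^0(\bar x) = n - 1$ uniformly, and $L$ is regular of rank $n - 1$.

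For the ``only if'' direction, assume $L$ is regular and $D$ is non-trivial. Pick $x \in D$ generating a separable maximal subfield (which exists in any central simple algebra by the standard theory); the computation above yields $\dim L^0(\bar x) = n - 1$ and thus $\rk L \leq n - 1$. Regularity of $L$ then forces $\dim L^0(\bar y) \leq n - 1$ for every nonzero $\bar y$. On the other hand, $L^0(\bar y) \supseteq \C_L(\bar y) \supseteq \C_D(y)/K1$, and the lower bound $\dim_K \C_D(y) \geq n$ (valid in any central simple algebra of degree $n$) gives $\dim \C_L(\bar y) \geq n - 1$. Squeezing yields $\dim_K \C_D(y) = n$, whence $K[y]$ is a maximal subfield and $\C_D(y) = K[y]$ for every non-central $y \in D$.

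To conclude, let $B$ be any proper unital subalgebra of $D$. As a finite-dimensional subalgebra of a division algebra, $B$ is itself a division algebra (the minimal polynomial of a nonzero $b \in B$ is irreducible with nonzero constant term, so $b^{-1} \in K[b] \subseteq B$), hence a simple $K$-subalgebra. The double centralizer theorem gives $\dim_K B \cdot \dim_K \C_D(B) = n^2$, and $B \subsetneq D$ forces $\dim_K \C_D(B) > 1$. Any non-central $w \in \C_D(B)$ then satisfies $B \subseteq \C_D(w) = K[w]$, which is commutative. Hence $D$ is minimal non-commutative. The most delicate point is controlling $\dim L^0(\bar x)$ from above, which rests on separability of the minimal polynomial of $x$ over $K$; this is automatic when $K$ is perfect, but over non-perfect fields an additional argument would be needed to ensure that no inseparable non-central elements arise.
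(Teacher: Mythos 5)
Your proof follows essentially the same route as the one the paper points to: Theorem \ref{min-noncomm} is quoted from an earlier paper of the author, whose proof is likewise said to be based on the double centralizer theorem, and your reduction of regularity of $D^{(-)}/K1$ to the statement that $\C_D(x)=K[x]$ is a maximal subfield for every non-central $x$ is exactly the expected mechanism. Both directions of your argument are complete and correct whenever the ground field $K$ is perfect, in which case every $K[x]$ is automatically separable (and, in fact, the degree of $D$ is then prime to the characteristic, since the Brauer group of a perfect field has no $p$-torsion).

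The separability caveat you raise at the end is, however, not a removable technicality: over a non-perfect field of characteristic $p$ the ``if'' direction genuinely fails, so no additional argument can close that gap. Take $K=\mathbb{F}_p(a,b)$ and let $D$ be the degree-$p$ symbol division algebra generated by $u,v$ with $u^p-u=a$, $v^p=b$, $vu=(u+1)v$. Being of prime degree, $D$ is minimal non-commutative, but $v$ is a non-central element with $K[v]$ purely inseparable; one checks that $\ad v$ is nilpotent on $D$ (it strictly lowers the degree in $u$ of each monomial $u^iv^j$), while $\ad u$ is not, since $[u,v]=-v$. Hence $D^{(-)}/K1$ is a non-nilpotent Lie algebra containing a nonzero ad-nilpotent element, and therefore is not regular. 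So the defect lies in the hypotheses of the theorem as stated (which omits perfectness) rather than in your argument; adding the hypothesis that $K$ is perfect makes your proof complete.
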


\begin{proof}
This is \cite[Theorem 6]{mna}. The proof given there, based on the double 
centralizer theorem, is characteristic-free.
\end{proof}

\begin{corollary}\label{cor-d}
If $D$ is a central associative division algebra of prime degree, then the Lie
algebra $D^{(-)}/K1$ is regular.
\end{corollary}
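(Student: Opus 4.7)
The plan is to invoke Theorem~\ref{min-noncomm}: it reduces the corollary to showing that any central associative division algebra $D$ of prime degree $p$ over its center $K$ is minimal non-commutative. Since $\dim_K D = p^2 \ge 4$, the algebra $D$ itself is non-commutative, so the task is to prove that every proper subalgebra $A \subsetneq D$ is commutative.

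First I would observe that any nonzero subalgebra $A$ of $D$ is automatically a division subalgebra. Indeed, for any nonzero $a \in A$, finite-dimensionality of $D$ forces $a$ to be algebraic over $K$; since $D$ is a division algebra, the minimal polynomial of $a$ has nonzero constant term, so rearranging it expresses both $1$ and $a^{-1}$ as $K$-linear combinations of positive powers of $a$, and these lie in $A$. Hence $A$ is a finite-dimensional division $K$-algebra and, in particular, a simple subalgebra of $D$.

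The key step is the divisibility $\dim_K A \mid \dim_K D = p^2$, which follows from the double centralizer theorem applied to the simple subalgebra $A$ of the central simple algebra $D$. Since $A$ is a proper subalgebra, this forces $\dim_K A \in \{1,p\}$. On the other hand, writing $F = \Z(A)$, the division algebra $A$ satisfies $\dim_F A = m^2$ for some positive integer $m$, and hence $\dim_K A = [F:K]\cdot m^2$. Since $p$ is prime, either possibility forces $m=1$, so $A = F$ is a field and in particular commutative. Thus $D$ is minimal non-commutative, and Theorem~\ref{min-noncomm} then yields that $D^{(-)}/K1$ is regular.

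I expect no serious obstacle: the only nontrivial input is the divisibility $\dim_K A \mid \dim_K D$, a standard consequence of the double centralizer theorem for central simple algebras; the remaining ingredients (closure under $1$ and inverses in a finite-dimensional division algebra, and the perfect-square factorization of the dimension of a division algebra over its center) are routine.
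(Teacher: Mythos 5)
Your proof is correct and follows essentially the same route as the paper: reduce via Theorem~\ref{min-noncomm} to the assertion that a central division algebra of prime degree is minimal non-commutative. The paper simply cites \cite[Corollary 1.2]{gein} for that assertion, whereas you supply a correct self-contained derivation of it from the double centralizer theorem (every nonzero subalgebra of $D$ is a division subalgebra containing $1$, its dimension divides $p^2$, and the square-times-field-degree factorization forces commutativity); both are fine.
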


\begin{proof}
According to \cite[Corollary 1.2]{gein}, $D$ is minimal non-commutative, and by
Theorem \ref{min-noncomm}, $D^{(-)}/K1$ is regular.
\end{proof}

\section{Commutators}\label{s-ass}

In 1936 Shoda proved that any $n \times n$ matrix of trace zero over a field of 
characteristic zero can be represented as a commutator of two matrices 
(\cite[Satz 3]{shoda}). Two decades later Albert and Muckenhoupt, 
\cite{albert-muck}, extended this result to matrices over an arbitrary field 
$K$. As matrices of trace zero form the Lie algebra $\Sl_n(K)$ under the commutator, this result 
can be formulated as follows: any element of $\Sl_n(K)$ can be represented as a
commutator. Brown, \cite{brown}, extended this result to all finite-dimensional
simple Lie algebras of classical type.

It is natural to ask the same question for arbitrary finite-dimensional simple
Lie algebras, in particular, for forms of the Lie algebra $\Sl_n(K)$. As 
indicated in \S \ref{s-not}.\ref{ss-forms}, such forms are described in terms of
simple associative algebras. In particular, if $A$ is a central simple 
associative algebra of degree $n$ over a field $K$, then 
$A \otimes_K \overline K \simeq M_n(\overline K)$, the full matrix algebra over
the algebraic closure $\overline K$, and we can define the \emph{reduced trace}
$\tr: A \to K$ by the formula $\tr(x) = \tr(x \otimes 1)$ for $x \in A$, where 
at the right-hand side of the equality stands the usual trace in 
$M_n(\overline K)$. Then 
$$
\set{x \in A}{\tr(x) = 0} = [A,A] ,
$$
so any element of reduced trace zero in $A$ is a \emph{sum} of commutators of
elements of $A$. Then one can ask whether any element of reduced trace zero in 
$A$ is \emph{a} commutator. This question was addressed by Amitsur and Rowen, 
\cite{trace-zero}. It was proved there that if $A$ is a matrix algebra of order
$n>1$ over a division algebra, then any noncentral element of reduced trace zero
(in particular, any element of reduced trace zero if characteristic of the 
ground field $K$ is zero), is a commutator, and further results were obtained 
for some particular cases of division algebras. However, in general the question
remains open, and, as reported in \cite[\S 2.1.1(v)]{kkp}, it is believed that 
in general the answer is negative.

Our contribution to this problem is the following

\begin{conjecture}
In a non-nilpotent regular Lie algebra having a nondegenerate symmetric 
invariant bilinear form, any element of a commutant is a commutator.
\end{conjecture}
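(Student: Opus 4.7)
The plan is to exploit anisotropy and the invariant form to reformulate the conjecture as a statement about orthogonal complements of Cartan subalgebras, and then attack that last statement via the classical structure theory.

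By Lemma~\ref{lemma-nn}, $L$ is anisotropic, so $\ad x$ is semisimple for every $x \in L$ and the Fitting decomposition reads $L = \C_L(x) \dotplus \im(\ad x)$; since $L$ is regular, for $x \neq 0$ the centralizer $\C_L(x)$ is in fact a Cartan subalgebra of dimension $\rk L$. The invariance identity $\langle [x,y], w \rangle = -\langle y, [x, w] \rangle$ says that $\ad x$ is skew-adjoint with respect to $\aform$; therefore $(\im \ad x)^{\perp} = \Ker(\ad x) = \C_L(x)$, and non-degeneracy of the form upgrades this to $\im \ad x = \C_L(x)^\perp$, making $L$ the orthogonal direct sum of $\C_L(x)$ and $\C_L(x)^\perp$.

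This yields the key reformulation: an element $z$ equals $[x,y]$ for some $y \in L$ if and only if $z \in \im \ad x$, which by the above is equivalent to $\langle z, \C_L(x) \rangle = 0$. Hence the conjecture reduces to showing that for every $z \in [L,L]$ there is a Cartan subalgebra $H$ of $L$ with $z \perp H$; an $x$ regular in such an $H$ will then witness $z = [x,y]$ for an appropriate $y$.

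The serious obstacle is precisely this reformulated statement. Over $\overline K$ it is essentially the Shoda--Albert--Muckenhoupt--Brown theorem for the simple classical Lie algebras; the genuine difficulty is producing a \emph{$K$-rational} such $H$, since the standard over-$\overline K$ constructions conjugate $H$ by an automorphism of $L \otimes_K \overline K$ that need not be $K$-rational. To attack this, I would invoke Theorem~\ref{th-as} to reduce to $L$ being a form of an almost simple classical Lie algebra and then handle each classical type separately; type~$A$ --- where $L$ is (a quotient of) $[A^{(-)},A^{(-)}]$ or $[\sym^-(A,J),\sym^-(A,J)]$ for a central simple associative algebra $A$ --- is at once the most important (being essentially equivalent to the Amitsur--Rowen open problem of~\cite{trace-zero}) and the most resistant, and the hope embodied in the hypotheses is that regularity together with the nondegenerate invariant form forces enough $K$-rational Cartan subalgebras to exist for the descent to succeed.
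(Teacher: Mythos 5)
This statement is labelled a \emph{conjecture} in the paper: the author gives no proof of it, only the same reduction you describe (Lemma \ref{lemma-cart}), a verification in the rank-one case (Lemma \ref{th-mna}), and the closing admission that the general argument ``is elusive so far.'' Your proposal reproduces that reduction faithfully and correctly: since $L$ is anisotropic (Lemma \ref{lemma-nn}) and regular, $\C_L(x) = L^0(x)$ is a Cartan subalgebra for every nonzero $x$, the skew-adjointness of $\ad x$ with respect to the invariant form gives $\im(\ad x) = \C_L(x)^\perp$, and so $z$ is a commutator if and only if $z$ is orthogonal to some Cartan subalgebra of the form $\C_L(x)$. This is exactly the paper's Lemma \ref{lemma-cart} specialized to $X = \{x\}$, and exactly the paper's final reformulation (``ensure, for any nonzero $x$, the existence of an element whose centralizer lies in $(Kx)^\perp$'').

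But from that point on you have not proved anything: the existence, for each $z$ in the commutant, of a $K$-rational Cartan subalgebra orthogonal to $z$ \emph{is} the conjecture, merely restated. Your final paragraph candidly says as much (``the hope embodied in the hypotheses is that \dots the descent \dots succeed''), so the proposal is a correct strategy outline rather than a proof. The concrete missing step is a rationality/descent argument: over $\overline K$ the needed Cartan subalgebra exists by Brown's theorem, but the conjugating automorphism need not be defined over $K$, and neither you nor the paper supplies a substitute. Since no complete proof of this statement exists in the paper either, your attempt cannot be faulted for failing to close the gap --- but it should be presented as a reduction and a research plan, not as a proof.
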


This conjecture would imply that in any central minimal noncommutative division
algebra $D$ whose degree is not divided by the characteristic of the ground 
field $p$ (and, in particular, a central division algebra of prime degree 
different from $p$), any element of reduced trace zero is a commutator. Indeed:
by Theorem~\ref{min-noncomm}, $D^{(-)}/K1$ is regular; since the degree of $D$ 
is not divided by $p$, $[D^{(-)}, D^{(-)}] \simeq D^{(-)}/K1$; and the Lie 
algebra $[D^{(-)}, D^{(-)}]$, being a form of $\Sl_n(K)$, has a nondegenerate symmetric
invariant bilinear form.

Over a perfect field of characteristic $\ne 2,3$, the conjecture is, 
essentially, about simple forms of classical Lie algebras. Indeed, by 
Theorem~\ref{th-as}, we are talking in this case about almost simple Lie 
algebras: $S \subseteq L \subseteq \Der(S)$, where $S$ is a simple form of a 
classical Lie algebra. By the same reasoning as at the beginning of 
\S \ref{s-min}, we can restrict this situation over the centroid $\Omega(S)$ of
$S$: $S_{\Omega(S)} \subseteq L_{\Omega(S)} \subseteq \Der(S_{\Omega(S)})$, and 
$L_{\Omega(S)}$ is not simple only in the case where $S_{\Omega(S)}$ is a form 
of $\psl_{kp}(\overline K)$ for some integer $k$, and 
$L_{\Omega(S)} = \Der(S_{\Omega(S)})$ is a form of $\pgl_{kp}(\overline K)$. 
But neither $\psl_{kp}(\overline K)$, nor $\pgl_{kp}(\overline K)$ have a 
nondegenerate invariant symmetric bilinear form, hence the same is true for 
$\Omega(S)$-algebras $S_{\Omega(S)}$ and $L_{\Omega(S)}$, and then for 
$K$-algebras $S$ and $L$, so this case is excluded.

However, a possible attack on this conjecture is not to utilize specificity of
forms of classical algebras, but general properties of Cartan subalgebras in 
regular Lie algebras, following the approach of \cite{brown} (where it is proved
that any element in a classical simple Lie algebra is a commutator), and also of
\cite[Appendix 3]{hofmann-morr}, \cite{akhiezer}, and \cite{m-nahlus} (where the
same is proved for the most real simple Lie algebras, including the compact 
ones). In all these treatises, it is proved -- via judicious manipulation with 
the automorphism group in the classical (split) case, and with the corresponding
Lie group in the real case -- that any element in a simple Lie algebra 
$\mathfrak g$ under consideration is conjugate to an element lying in the 
Fitting $1$-component with respect to a suitable Cartan subalgebra 
$\mathfrak h$. But $\mathfrak h = C_{\mathfrak g}(x)$ for some element 
$x \in \mathfrak h$, and then $\mathfrak g^1(\mathfrak h) = [\mathfrak g,x]$. 
Consequently, any element in the Fitting $1$-component with respect to a Cartan
subalgebra is a commutator, and thus any element of $\mathfrak g$ is a 
commutator (with the corresponding element $x$).

In the regular case, instead of manipulating with the automorphism group -- 
which, in the case of an arbitrary field and an arbitrary form, appears to be 
problematic -- we may try to use the flexibility of the choice of $\mathfrak h$
and $x$ (in fact, any nonzero $x$ will do!), and the presence of nondegenerate symmetric 
invariant bilinear form.

A first step might be provided by the following

\begin{lemma}\label{lemma-cart}
Let $L$ be a Lie algebra with a nondegenerate symmetric invariant bilinear form,
and $X$ a set of almost commuting elements of $L$. Then $L^0(X)^\perp = L^1(X)$.
\end{lemma}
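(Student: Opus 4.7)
My plan is to prove the identity by establishing the inclusion $L^1(X) \subseteq L^0(X)^\perp$ and then using a dimension count with the Fitting decomposition to upgrade it to equality.

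First I would record the basic skew-symmetry property of $\ad x$ with respect to the form: by invariance, $\langle [x,w], z \rangle = \langle x, [w,z] \rangle = -\langle w, [x,z] \rangle$, so $(\ad x)$ is skew-adjoint with respect to $\aform$. Iterating yields $\langle (\ad x)^n(w), z \rangle = (-1)^n \langle w, (\ad x)^n(z) \rangle$ for every $n$ and every $w,z \in L$.

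Next I would use this to show $L^1(X) \subseteq L^0(X)^\perp$. It is enough to check the inclusion on each summand $\bigcap_{n \ge 0} (\ad x)^n(L)$ for $x \in X$. Fix such $x$, take $y \in \bigcap_{n \ge 0} (\ad x)^n(L)$, and take $z \in L^0(X)$. By definition of $L^0(X)$ there is an integer $m$ with $(\ad x)^m(z) = 0$, and by assumption we can write $y = (\ad x)^m(w)$ for some $w \in L$. Then
$$
\langle y, z \rangle = \langle (\ad x)^m(w), z \rangle = (-1)^m \langle w, (\ad x)^m(z) \rangle = 0.
$$
Summing over $x \in X$ gives $L^1(X) \subseteq L^0(X)^\perp$.

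Finally, I would conclude via dimension counting. The Fitting decomposition gives $\dim L = \dim L^0(X) + \dim L^1(X)$, while nondegeneracy of $\aform$ yields $\dim L^0(X)^\perp = \dim L - \dim L^0(X) = \dim L^1(X)$. Combined with the inclusion just established, this forces $L^0(X)^\perp = L^1(X)$. There is no real obstacle here beyond verifying the sign in the skew-adjointness computation; everything else is a routine consequence of invariance of the form and the definitions of the Fitting components.
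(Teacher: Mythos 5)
Your proof is correct and follows essentially the same route as the paper: the same skew-adjointness identity $\langle (\ad x)^n(w),z\rangle = (-1)^n\langle w,(\ad x)^n(z)\rangle$, one inclusion between a Fitting component and an orthogonal complement (you state it as $L^1(X)\subseteq L^0(X)^\perp$, the paper as the equivalent $L^0(X)\subseteq L^1(X)^\perp$), and the same dimension count via the Fitting decomposition and nondegeneracy. No gaps.
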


\begin{proof}
Let $\aform$ be a nondegenerate symmetric invariant bilinear form on $L$. For 
any $x,y,z \in L$, and any $n$, we have
$$
\langle y,(\ad x)^n(z) \rangle = (-1)^n \langle (\ad x)^n(y),z \rangle .
$$

If $y \in L^0(X)$, this equality implies that $y \in L^1(x)^\perp$ for any
$x \in X$, and hence $y \in L^1(X)^\perp$. Therefore, 
$L^0(X) \subseteq L^1(X)^\perp$. Due to the Fitting decomposition, we have 
$\dim L^0(X) + \dim L^1(X) = \dim L$. On the other hand, since $\aform$ is 
nondegenerate, $\dim L^0(X) + \dim L^0(X)^\perp = \dim L$. Consequently, 
$\dim L^0(X)^\perp = \dim L^1(X)$, and $L^0(X)^\perp = L^1(X)$.
\end{proof}

Lemma \ref{lemma-cart} is just a slight generalization of 
\cite[Lemma 2.1]{akhiezer} and \cite[Remark 5.1]{m-nahlus}.

The suggested approach is illustrated by the following elementary 

\begin{lemma}\label{th-mna}
In a simple regular Lie algebra of rank $1$, having a nondegenerate symmetric 
invariant bilinear form, any element is a commutator.
\end{lemma}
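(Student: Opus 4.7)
The plan is to use the rank-one hypothesis together with Lemma \ref{lemma-cart} to identify the image of $\ad x$ with a hyperplane given by the form. Concretely, fix any nonzero $x \in L$. Since $\rk L = 1$ and regularity gives $\dim L^0(x) = \rk L$, and since $x$ itself always lies in $L^0(x)$, we obtain $L^0(x) = Kx$. Applying Lemma \ref{lemma-cart} to the singleton $X = \{x\}$ (which is trivially almost commuting), the nondegenerate invariant form yields
\begin{equation*}
L^1(x) = L^0(x)^\perp = (Kx)^\perp.
\end{equation*}

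Next I would identify $L^1(x)$ with the commutator image $[L,x]$. Indeed, the Fitting decomposition $L = Kx \dotplus L^1(x)$ is $\ad x$-invariant; on $Kx$ the map $\ad x$ vanishes, while on $L^1(x)$ it is invertible. Hence $[L,x] = [Kx,x] + [L^1(x),x] = L^1(x)$, so
\begin{equation*}
[L,x] = (Kx)^\perp.
\end{equation*}

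Now take an arbitrary $y \in L$; we want $y = [a,b]$ for some $a,b \in L$. If $y = 0$ this is trivial. Otherwise, since $L$ is simple we have $\dim L \ge 3$, and nondegeneracy of the form makes $y^\perp \subset L$ a proper hyperplane, which therefore contains some nonzero element $x$. For this $x$, the orthogonality $\langle y,x\rangle = 0$ means $y \in (Kx)^\perp = [L,x]$, so $y = [a,x]$ for some $a \in L$, completing the proof.

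There is no real obstacle here; everything is structural. The only point that deserves a moment of care is verifying that $L^1(x) = [L,x]$, which follows from the invertibility of $\ad x$ on the Fitting $1$-component. The flexibility highlighted just before the lemma statement — that we get to choose $x$ after being handed $y$ — is exactly what makes the argument work: we pick $x$ inside the hyperplane $y^\perp$, and rank $1$ ensures this hyperplane coincides with $[L,x]$.
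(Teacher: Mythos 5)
Your proof is correct and is essentially the paper's own argument (with the roles of $x$ and $y$ swapped): given the target element, choose a nonzero element orthogonal to it, observe that regularity and rank $1$ force its Fitting $0$-component to be the line it spans, and apply Lemma \ref{lemma-cart} to conclude the target lies in $L^1 = [L,\cdot]$. Your explicit verification that $[L,x]=L^1(x)$ fills in a step the paper leaves implicit (it is discussed informally just before the lemma), but the approach is the same.
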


\begin{proof}
Let $L$ be a simple regular Lie algebra of rank $1$, with a nondegenerate 
symmetric invariant bilinear form $\aform$, and $x$ a nonzero element of $L$. 
Take a nonzero $y \in L$ such that $\langle x,y \rangle = 0$. Then $Ky$ is a Cartan
subalgebra of $L$, and by Lemma~\ref{lemma-cart}, $x \in (Ky)^\perp = [L,y]$. 
\end{proof}

\begin{corollary}
In a quaternion division algebra, any element of reduced trace zero is a 
commutator.
\end{corollary}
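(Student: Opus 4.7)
The plan is to realize the space of reduced-trace-zero elements of $D$ as a simple regular Lie algebra of rank $1$ equipped with a nondegenerate symmetric invariant bilinear form, and then invoke Lemma~\ref{th-mna} directly.

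Set $L = \set{x \in D}{\tr(x) = 0}$. Since the reduced trace vanishes on commutators, $L$ is a Lie subalgebra of $D^{(-)}$; extending scalars via $D \otimes_K \overline K \simeq M_2(\overline K)$ sends $L \otimes_K \overline K$ isomorphically onto $\Sl_2(\overline K)$, so $L$ is a form of $\Sl_2$, hence simple of rank $1$ (rank being preserved under field extensions). Assuming the characteristic of $K$ is not $2$, as is customary for quaternion algebras, we have $1 \notin L$ and $D = K \cdot 1 \dotplus L$ as vector spaces. The restriction to $L$ of the canonical projection $D^{(-)} \to D^{(-)}/K1$ is therefore a Lie algebra isomorphism $L \simeq D^{(-)}/K1$. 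Since $D$ is central of prime degree $2$, Corollary~\ref{cor-d} asserts that $D^{(-)}/K1$, and hence $L$, is regular.

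For the invariant form I would take the reduced trace form $\langle x, y \rangle = \tr(xy)$ on $D$, which is symmetric, invariant (by the cyclic property of $\tr$), and nondegenerate since $D$ is central simple. A direct calculation gives $(K \cdot 1)^\perp = L$; since $L \cap K \cdot 1 = 0$ in characteristic $\ne 2$, the restriction of $\aform$ to $L$ is also nondegenerate. Thus $L$ satisfies all the hypotheses of Lemma~\ref{th-mna}, and every element of $L$, i.e.\ every reduced-trace-zero element of $D$, is a commutator $[a,b]$ with $a,b \in L \subseteq D$.

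No serious obstacle is expected here, as the corollary is essentially the assembly of Corollary~\ref{cor-d} and Lemma~\ref{th-mna} via the identification $L \simeq D^{(-)}/K1$. The only minor points to verify are this identification (which is the place where characteristic $2$ would need separate attention) and the nondegeneracy of the trace form on $L$; both are immediate from the decomposition $D = K \cdot 1 \dotplus L$.
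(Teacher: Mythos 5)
Your proposal is correct and follows essentially the same route as the paper, whose proof is the one-line observation that the trace-zero elements form the Lie algebra $[Q^{(-)},Q^{(-)}]$ satisfying the hypotheses of Lemma~\ref{th-mna}; you merely fill in the details (simplicity and rank via descent to $\Sl_2(\overline K)$, regularity via the identification with $D^{(-)}/K1$ and Corollary~\ref{cor-d}, nondegeneracy of the reduced trace form). Your explicit caveat about characteristic $2$ --- where $1$ has reduced trace zero, so $[Q^{(-)},Q^{(-)}]$ acquires a center and Lemma~\ref{th-mna} no longer applies --- is a legitimate point that the paper glosses over.
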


\begin{proof}
If $Q$ is a quaternion division algebra, then elements of reduced trace zero
form a Lie algebra $[Q^{(-)},Q^{(-)}]$, which satisfies the conditions of 
Lemma~\ref{th-mna}.
\end{proof}

This elementary result is, of course, not new, and can be checked by simple
direct computations. In \cite[Corollary 0.9]{trace-zero} another elementary 
proof, based on the existence of inseparable elements, is given.

In the general case, the task is to ensure, for any nonzero $x \in L$, the 
existence of an element whose centralizer lies in $(Kx)^\perp$, a subspace of
$L$ of codimension $1$. This seems to be highly plausible, but the proof is 
elusive so far.

\section*{Acknowledgement}

Thanks are due to Alexander Gein and Vicente Varea for supplying me with a
hard-to-find item \cite{gein-book}.

\end{document}